\newtheorem{thm}{Theorem}
\newtheorem{fact}{Fact}
\newtheorem{lemma}[thm]{Lemma}
\newtheorem{conj}[thm]{Conjecture}
\def\beq{ \begin{equation} }
\def\eeq{ \end{equation} }
\def\mn{\medskip\noindent}
\def\ep{\epsilon}
\def\square{\vcenter{\vbox{\hrule height .4pt
  \hbox{\vrule width .4pt height 5pt \kern 5pt
        \vrule width .4pt} \hrule height .4pt}}}
\def\CC{\mathbb{C}}
\def\RR{\mathbb{R}}
\def\ZZ{\mathbb{Z}}
\def\var{\hbox{var}\,}
\newcommand{\ind}[1]{\mathbf1\{#1\}}
\renewcommand{\vec}{\mathbf }
\title{Poisson percolation on the oriented square lattice}
\author{Irina Cristali, Matthew Junge, and Rick Durrett}
\begin{document}

\maketitle

\begin{abstract}
In Poisson percolation each edge becomes open after an independent exponentially distributed time with rate that decreases in the distance from the origin. As a sequel to our work on the square lattice, we describe the limiting shape of the component containing the origin in the oriented case. We show that the 
density of occupied sites at height $y$ in the cluster is close to the percolation probability in the corresponding homogeneous percolation process, and we study the fluctuations of the boundary.
\end{abstract}

\section{Introduction}

\emph{Percolation} was introduced by Broadbent and Hammersley a little over 60 years ago  to model a porous medium \cite{BH}. The model goes by including each edge of the integer lattice $\mathbb Z^d$ independently with probability $p$.  
One of the most fundamental questions is whether the subgraph contains an infinite component. 
There is known to be a critical value $p_c(d)$ such that for $p>p_c(d)$ such a component exists almost surely. A  vast amount of literature is devoted to understanding the geometry of this component for different values of $p$. See Grimmett's book \cite{grimmett} for a thorough introduction or
the article by Beffara and Sidoravicius \cite{BS} for a briefer overview. 

The subgraph obtained via homogeneous percolation is static. In \cite{poisson} we introduced Poisson percolation, which has a stochastically growing set of open edges. This could potentially model a medium that becomes more porous over time. 
Each edge in the unoriented square lattice $\mathbb Z^2$ with midpoint $x\in \mathbb Z^2$ becomes open at rate $\|x\|_{\infty}^{-\beta}$.
Thus, the probability an edge is open at time $t$ is equal to
$
\rho(x,t) = 1 -  \exp( -t \|x \|^{-\beta}).
$
We studied three aspects of the structure of $\mathbb C_0$, the connected component containing $0$. 
See Figure \ref{fig:sim} for a simulation.

\mn
{\bf Size and shape of the cluster.}
For fixed $t$, the probability an edge beyond distance $N = t^{1/\beta} (\log2)^{-1/\beta}$ is open is smaller than $p_c(2) = 1/2$. Accordingly, we show \cite[Theorem 1]{poisson} that with high probability $\mathbb C_0 \subseteq (1+\epsilon)[-N,N]^2$ for all $\epsilon >0$. 

\mn
{\bf Cluster density.} Fix $1/2<a<1$ and tile $(1-\epsilon)[-N, N]^2$ with boxes $R_{i,j}$ with side-length $N^a$. In \cite[Theorem 2]{poisson} we  proved that with high probability the density in each box $|\mathbb C_0 \cap R_{i,j}|/N^{2a}$ is close to the density of the giant component in homogeneous percolation with parameter $\rho(x_{i,j},t)$.

\mn
{\bf Fluctuations of the boundary.} The fluctuations of $\mathbb C_0$ in the $e_1$ direction are defined as  $
|N-\max\{x \colon (x,0) \in \mathbb C_0(t)\}|.
$ 
Our understanding of this quantity comes from work of Nolin \cite{nolin} on gradient percolation, in which the probability $p$ that a bond (or site) is open decreases linearly from 1 to 0 as the height is increased from 0 to $N$. Rather than edge percolation on a square lattice, he considered site percolation on the triangular lattice in order to take advantage of the rigorous computation of critical exponents by Smirnov and Werner \cite{SW}. In this setting, he studied the shape of the boundary of the cluster containing the base of a trapezoidal region of length $\ell_N$ and height $N$. He found that the edge stays in a strip of width $N^{4/7+\delta}$ centered at $N/2$, and the length of the front is $\ell_N^{(3/7)\pm \delta}$. These results are expected to hold on the square lattice. In our system the change in the density is nonlinear but smooth. Since the position of the front is dictated by bonds that are open with probabilities close to $1/2$, the boundary behavior should be the same.

\begin{figure} 
  \centering
  \includegraphics
  [width=5.67in,height=5.52in,keepaspectratio]
  {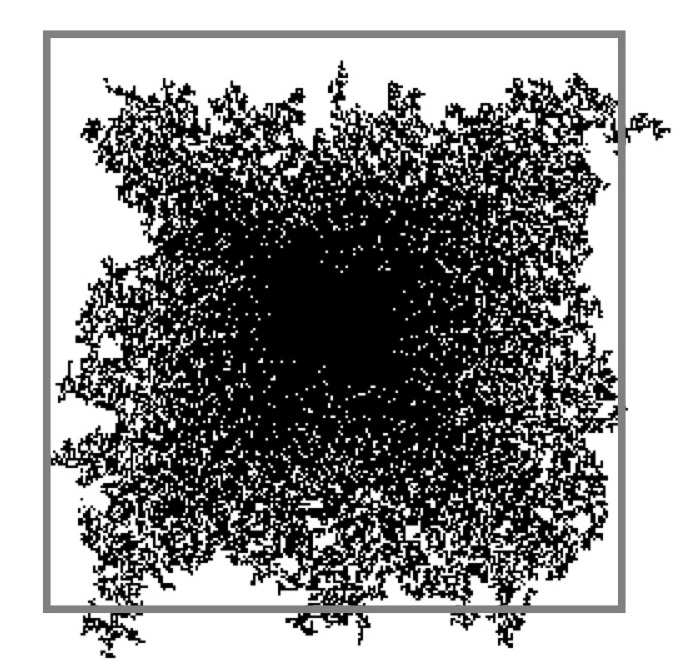}
  \caption{$\mathbb C_0$ on the unoriented square lattice for $\beta=1$ and $t = 104$. The gray box has radius $N =150$.}
  \label{fig:sim}
\end{figure}


In this article we study Poisson percolation on oriented lattice ${\cal L} = \{ (m,n) \in \ZZ^2 \colon m+n \hbox{ is even} \}$
with oriented edges from $(m,n) \to (m+1,n+1)$ and $(m,n) \to (m-1,n+1)$. This is $\mathbb Z^2$ rotated $45^\circ$.
We will again study the size and shape of the cluster, its density, and edge fluctuations. The oriented case has
fewer symmetries so the shape is more interesting than a square (see Figure \ref{fig:op}).

\begin{figure}
\begin{center}
\includegraphics[width = 12 cm]{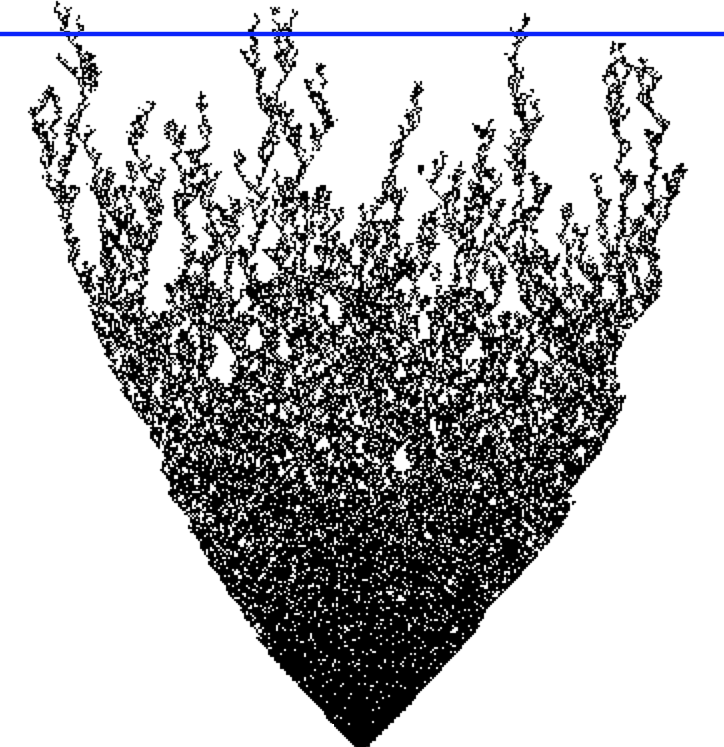}	
\end{center}
	\caption{$\mathbb C_0$ on the oriented lattice with $\beta =1/2$ and $t=30$. The blue line is at height $N\approx 839$. } \label{fig:op}
\end{figure}

Fix $\beta>0$. An edge with midpoint $(x,y)$ and $y>0$  is open with probability 
\beq
\rho(y,t) = 1 - \exp(-ty^{-\beta}).
\label{rhoyt}
\eeq
Let $n(p,t)= \max\{ y \colon \rho(y,t) \ge p \}$ be the largest height at which edges are open with probability $\ge p$. 
A little algebra shows that 
\beq
n(p,t) \sim c_{p,\beta}t^{1/\beta}\quad\hbox{where}\quad c_{p,\beta} = (-\log(1-p))^{-1/\beta}.
\label{npt}
\eeq
We write $(x,m) \to (y,n)$ if there is a path of open edges from $(x,m)$ to $(y,n)$. Let $$\CC_0(t) = \{ (x,n) \colon (0,0) \to (x,n) \},$$
and let
\beq
f(y)  = \rho(yt^{1/\beta},t) = 1 - \exp(-y^{-\beta}).
\label{foy}
\eeq
Define $y_c$ by $f(y_c) = p_c$, where $p_c \approx .64470019$ (see page 5242 of \cite{Jensen}) is the critical value $\inf\{ p\colon  P( |{\cal C}_0| = \infty \}$
where ${\cal C}_0$ is the cluster containing the origin. 

\subsection{Size and shape of the cluster}

To define the limiting shape of $\mathbb C_0$ we need to introduce the \emph{right-edge speed} in homogeneous percolation. Consider homogeneous percolation with parameter $p$ on oriented ${\cal L}$. Following \cite{RD80}, we let 
$$
r_k = \sup \{ x \colon \exists y\leq 0  \text{ with } (y,0) \to (x,k) \}
$$ 
be the right most site at height $k$ that can be reached from a site in $(-\infty,0] \times \{0\}$. The subadditive ergodic theorem guarantees the existence of a limiting speed $ r_k / k \to \alpha(p)$ for $p \ge p_c$. Obviously $\alpha(1)=1$. It is known that $\alpha(p_c)=0$. When $p<p_c$ the system dies out
exponentially fast so $\alpha(p)=-\infty$.

Letting $g(0)=0$ and $g'(y) = \alpha(f(y))$ for $0 \le y \le y_c$ we define our limiting shape
$$
\Gamma = \{ (x,y)\colon |x| < g(y), 0 \le y \le y_c \} \subseteq \mathbb R^2.
$$
Intuitively the shape result is
$$
\mathbb C_0(t) / t^{1/\beta} \to \Gamma.
$$

To prove the result it is convenient to work on the unscaled lattice. 
Let $r_t(k) = \max\{x \colon (x,k) \in \mathbb C_0(t)\}$ be the right edge at height $k$ at time $t$, 
and let $\ell_t(k) = \min\{x \colon (x,k) \in \mathbb C_0(t)\}$. 
It is convenient to have $g$ defined for all $y>0$ so we let $g(y) = g(y_c)$ for $y> y_c$.
Let 
$$
\Gamma_t(y) = t^{1/\beta}g(y/t^{1/\beta}).
$$
Throughout the paper we will let $N=n(p_c,t)$.

\begin{thm} \label{thm:ooutside}
For any $\eta > 0$, as $t \to\infty$, 
\begin{enumerate}[(i)]
	\item $P( \CC_0(t) \subset \RR \times [0,(1+\eta) N]) \to 1$, and
\item 
$
P\Bigl(  -(1+\eta)\Gamma_t(k) \le \ell_t(k), \  r_t(k) \le  (1+\eta) \Gamma_t (k) \text{ for all $k \le (1+\eta)N$ }\Bigr) \to 1 .$
\end{enumerate}
\end{thm}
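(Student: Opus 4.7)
\medskip\noindent\textbf{Plan.} I prove (i) and (ii) separately. Throughout, by the left--right symmetry of the oriented lattice and the dependence of $\rho$ only on $y$, the distribution of $-\ell_t(k)$ equals that of $r_t(k)$, so it suffices to control the right edge.

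For (i), fix $\eta' = \eta/2$. By (\ref{rhoyt}) and (\ref{npt}),
\[ q := \rho\bigl((1+\eta')N, t\bigr) = 1 - (1-p_c)^{(1+\eta')^{-\beta}} < p_c, \]
since the exponent is strictly less than $1$. Bonds in the slab $\{y \geq (1+\eta')N\}$ are stochastically dominated by homogeneous subcritical oriented percolation at parameter $q$, so by exponential decay of subcritical connection probabilities any single site at height $(1+\eta')N$ reaches height $(1+\eta)N$ with probability at most $e^{-c(q)\eta' N}$. Since $|\CC_0(t) \cap (\RR \times \{(1+\eta')N\})| \leq (1+\eta')N + 1$, a union bound yields (i).

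For (ii), partition $[0, (1+\eta)N]$ into consecutive strips $I_j = [y_{j-1}, y_j]$ of height $h = \lfloor N^a \rfloor$ with $a \in (1/2, 1)$. All bonds with midpoint in $I_j$ have probability in $[\rho(y_j,t), \rho(y_{j-1},t)]$, so by monotone coupling the process in $I_j$ is dominated by homogeneous oriented percolation at parameter $p_j^+ = \rho(y_{j-1}, t)$. For strips with $p_j^+ > p_c$ I invoke a large-deviation bound for the right edge of supercritical oriented percolation started from a half-line (as in \cite{RD80}): with probability at least $1 - e^{-ch}$ the right edge advances in $I_j$ by at most $h(\alpha(p_j^+) + \epsilon)$. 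For strips with $p_j^+ \leq p_c$, the subcritical exponential decay used in (i) keeps the horizontal advance $O(\log N)$ with high probability. A union bound over the $O(N^{1-a})$ strips preserves high probability, and telescoping yields
\[ r_t(k) \leq \sum_{j : y_{j-1} < N} h\bigl(\alpha(p_j^+) + \epsilon\bigr) + O(\log N) + O(h). \]
A Riemann-sum comparison (the integrand $y \mapsto \alpha(\rho(y, t))$ is monotone with total variation $1$) gives $\sum_j h\,\alpha(p_j^+) \leq \Gamma_t(k) + O(N^a)$. Concavity of $g$ with $g(0) = 0$ makes $\Gamma_t(k)/k$ nonincreasing and bounded below by $g(y_c)/y_c =: c_0 > 0$ on $(0, N]$, so the total additive error $\epsilon(1+\eta)N + O(N^a)$ is absorbed into $\eta\,\Gamma_t(k)$ by choosing $\epsilon$ sufficiently small.

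The hardest step will be near the critical strip: as $p_j^+ \downarrow p_c$, the speed $\alpha(p_j^+) \to 0$ and the large-deviation rate for the right edge degenerate, so the ``$+\epsilon$'' slack cannot be justified uniformly over all supercritical strips. I would handle this by isolating the single strip straddling $y = N$ and bounding its contribution by the trivial deterministic inequality $r_t(y_j) - r_t(y_{j-1}) \leq h = N^a$, which is negligible compared to the $\eta\,\Gamma_t(k) = \Theta(\eta N)$ margin once $\eta$ is fixed.
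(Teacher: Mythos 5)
Your part (i) is essentially the paper's argument (dominate the slab above height $(1+\eta/2)N$ by homogeneous subcritical percolation, use exponential decay of connection probabilities as in Fact \ref{fact:subpath}, and union bound over the at most $O(N)$ sites of $\CC_0(t)$ at the entry height), so no comment is needed there beyond noting that the left--right symmetry reduction is fine.

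For part (ii) there is a genuine gap, and it is exactly the one you flag yourself: with strips of height $h=N^a$ you have $\Theta(N^{1-a})$ strips whose dominating parameters $p_j^+=\rho(y_{j-1},t)$ sweep continuously down to $p_c$, and the constants in Fact \ref{fact:edgeLD} (and in the subcritical decay you use above height $N$) depend on $p$ with no uniformity as $p\to p_c$. Your proposed repair --- throwing away only ``the single strip straddling $y=N$'' by the deterministic bound $h$ --- does not close this: for any fixed $\epsilon_0>0$, \emph{all} strips with $y_{j-1}\in[(1-c\epsilon_0)N,\,N]$ (about $\epsilon_0 N^{1-a}$ of them, a number growing with $t$) have $p_j^+-p_c\le\epsilon_0$, so the speed gap and the large-deviation rate degenerate across a macroscopic window, not one strip; similarly, strips just above $N$ have $p_j^+$ subcritical but tending to $p_c$, so the ``$O(\log N)$ horizontal advance'' claim is not justified there either. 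To make your scheme work you would need either a uniform-in-$p$ version of Fact \ref{fact:edgeLD} on a compact interval $[p_c+\epsilon_0,1]$ together with a deterministic sacrifice of a window of width $\epsilon' N$ below $N$ with $\epsilon'$ small compared with $\eta g(y_c)/y_c$, which is not available from the facts as cited. The paper avoids the issue entirely by using a \emph{fixed} finite number $m$ of strips with parameters $p_i$ chosen via $\alpha(p_i)=1-i/m$: each $p_i$ is bounded away from $p_c$, so the constants in Fact \ref{fact:edgeLD} are fixed while $t\to\infty$, and the near-critical region below $(1+\eta)N$ is covered by the last strip, whose dominating edge speed $\alpha(p_{m-1})=1/m<\eta/2$ is already small enough that its contribution fits under $(1+\eta)\Gamma_t$; the comparison of the resulting piecewise-linear envelope $h_t$ with $(1+\eta)\Gamma_t$ is then a deterministic lemma. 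A secondary, fixable point: your additive error $O(N^a)$ is not absorbed by $\eta\Gamma_t(k)$ when $k\lesssim N^a$, so for small $k$ you should instead use the deterministic bound $r_t(k)\le k\le(1+\eta)\Gamma_t(k)$, valid for $k=o(t^{1/\beta})$ since $\alpha(\rho(k,t))\to1$ there (the paper's first strip has $p_0=1$ and handles this deterministically).
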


Proving Theorem 1 (i) is easy because our percolation process is subcritical when $y>N$. To prove Theorem 1 (ii) we fix $m$ (it does not grow with $t$) and decompose $\mathbb Z \times [0,(1+\eta)N]$ into $m$ strips, $\mathbb Z \times [z_i,z_{i+1})$, so that $\alpha_i = \alpha( \rho(z_i,t) )= 1- i/m$ for $i <m$. We dominate the process in each strip by using homogeneous percolation with probability $p_i = \rho(z_i,t)$. Large deviation estimates on the distance of the right edge from $\alpha$ from \cite{RD84} allow us to prove that $\mathbb C_0$ lies to the left of a piecewise linear function whose slope is $\alpha_i$ in each strip. 

The next result gives a corresponding lower bound.

\begin{thm} \label{thm:RE}
For any $\eta >0$, as $t\to\infty$
$$
P( \ell_t(k) \le -(1-\eta)\Gamma_t(k) \text{ and }   (1-\eta) \Gamma_t(k) \le r_t(k) \text{ for all $k \le (1-\eta)N$ }) \to 1.
$$
\end{thm}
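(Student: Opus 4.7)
The strategy mirrors the proof of Theorem \ref{thm:ooutside}(ii) but with stochastic \emph{minorization}: strip by strip I bound $\mathbb C_0(t)$ from below by a supercritical homogeneous oriented percolation process. I describe the lower bound for $r_t$; the bound for $\ell_t$ follows by reflection symmetry. Fix $\eta > 0$, choose a small $y_* \in (0, \eta)$ and a large integer $m$ (both tuned later), and partition heights into an initial strip $[0, z_0]$ with $z_0 = y_* t^{1/\beta}$ together with $m$ equal strips $[z_{i-1}, z_i]$ of height $h = ((1-\eta)N - z_0)/m$. Inside strip $i \ge 1$, every edge is open with probability at least $p_i := \rho(z_i, t)$; since $z_i \le (1-\eta)N$, a direct computation gives $p_i \ge 1 - (1-p_c)^{(1-\eta)^{-\beta}} =: p_c + 2\delta$ for some $\delta = \delta(\eta) > 0$. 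After discarding edges, the Poisson process in strip $i$ stochastically minorizes homogeneous percolation at $p_i$, uniformly supercritical. In the initial strip every edge is open with probability at least $p_* = 1 - \exp(-y_*^{-\beta})$, which tends to $1$ as $y_* \to 0$.

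Choose $y_*$ so small that $\theta(p_*) \ge 1 - \eta/4$ and $\alpha(p_*) \ge 1 - \eta/4$. Starting from the origin in homogeneous percolation at $p_*$, the cluster survives to height $z_0$ with probability tending to $1$ as $t \to \infty$, and the large-deviation estimates of \cite{RD84} guarantee that its right edge at height $z_0$ is at least $(1-\eta/4) \alpha(p_*) z_0$ except on an event of probability $e^{-c t^{1/\beta}}$; moreover a positive density of sites is occupied in the intervening interval. For the subsequent $m$ strips I use a standard block / renormalization argument (e.g.\ \cite[Ch.\ 7]{grimmett}): for any $p \ge p_c + \delta$ there is $L_0 = L_0(\delta)$ such that starting from $\ge L_0$ consecutive occupied sites at the bottom of a strip of height $h$, with probability $\ge 1 - e^{-cL_0} - e^{-ch}$ the right edge advances by at least $(\alpha(p) - \eta/8) h$. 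Iterating through the $m$ strips (each new base easily exceeds $L_0$ by the previous step) and union-bounding, the total failure probability is $O(m(e^{-cL_0} + e^{-ch}))$, which vanishes as $t \to \infty$.

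Collecting the strip estimates, at each grid height $z_j$ the right edge satisfies
\[ r_t(z_j) \ge (1 - \eta/4)\alpha(p_*) z_0 + \sum_{i=1}^{j}(\alpha(p_i) - \eta/8)\, h. \]
By continuity of $\alpha \circ f$ on the compact set $[y_*, 1-\eta]$, as $m \to \infty$ this Riemann sum approximates $\Gamma_t(z_j) - \Gamma_t(z_0)$, and $\Gamma_t(z_0) = o(\Gamma_t((1-\eta)N))$ since $y_*$ was chosen small. Hence $r_t(z_j) \ge (1-\eta/2)\Gamma_t(z_j)$ simultaneously for $j = 0, 1, \ldots, m$. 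To extend the bound to every $k \le (1-\eta)N$, observe that any path witnessing $(r_t(z_j), z_j) \in \mathbb C_0(t)$ passes through height $k$ at some $x$-coordinate within $z_j - k$ of $r_t(z_j)$, so $r_t(k) \ge r_t(z_j) - (z_j - k)$; combined with the $1$-Lipschitz property of $\Gamma_t$ (from $g' = \alpha \circ f \le 1$) and large $m$ making $h$ negligible compared to $\Gamma_t(k)$ on $[z_0, (1-\eta)N]$, this yields the required uniform lower bound.

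The principal difficulty is controlling survival through all $m$ strips with uniform exponential rates: the block argument requires $\theta(p)$ and the right-edge large-deviation rates to be bounded below uniformly for $p \in [p_c + \delta, 1]$, which follows from continuity of these quantities away from $p_c$. A secondary subtlety is that the process begins at a single site, which is handled by the near-deterministic initial strip with $p_* \approx 1$ so that the cluster enters strip $1$ with a macroscopically wide, densely populated base.
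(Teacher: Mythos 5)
Your overall strategy (strip decomposition, minorization by homogeneous supercritical percolation in each strip, a block/renormalization argument to move the right edge at speed close to $\alpha(p_i)$, then a Riemann-sum comparison with $\Gamma_t$ and Lipschitz interpolation between grid heights) is the same as the paper's. But there is a genuine gap at the start. You minorize the entire initial strip $[0,z_0]$, $z_0=y_*t^{1/\beta}$, by homogeneous percolation at the fixed parameter $p_*=1-\exp(-y_*^{-\beta})<1$, and then claim that ``starting from the origin \ldots the cluster survives to height $z_0$ with probability tending to $1$ as $t\to\infty$.'' That is false for the minorizing process: from a single site it dies out with probability at least $(1-p_*)^2>0$, uniformly in $t$ (its survival probability is $\theta(p_*)<1$, not $1-o(1)$). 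Since the theorem asserts that the probability of the good event tends to $1$ for each fixed $\eta$, an error of size $1-\theta(p_*)$ that does not vanish with $t$ is fatal; your closing remark that the strip is ``near-deterministic'' does not hold because $p_*$ is fixed, not tending to $1$ with $t$. The repair is exactly the paper's Lemma \ref{solid}: for heights $y\le t^{b/\beta}$ with $b<1$ each edge is closed with probability at most $\exp(-t^{1-b})$, so with probability $\to 1$ the cluster contains the full triangle $\mathcal K(t^{b/\beta})$, giving a base of diverging width from which the renormalized process can be started with failure probability $\to 0$. The minorization at fixed $p_*$ throws away precisely the $t$-dependence near the origin that makes this work.

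A second, less severe, gap is in the iteration between strips. Your block lemma only controls the position of the right edge at the top of a strip, yet the induction needs, at each strip boundary, a block of order $L_0$ occupied sites near the right edge that is connected back to the origin; ``each new base easily exceeds $L_0$ by the previous step'' is asserted, not proved, and the citation to Grimmett's Chapter 7 (unoriented percolation) does not supply the needed oriented statement. This is exactly what the paper's construction in Section \ref{sec:renorm} is for: overlapping parallelograms $A^i_{m,n}$, $B^i_{m,n}$ whose crossing events form a one-dependent percolation with parameter $\ge 1-\ep$ (via \cite[Section 9]{RD84}), so that the right edge of the renormalized process both advances at speed $q$ close to $1$ (Fact \ref{fact:reld}) and carries with it the open paths needed to seed the next strip, with the left/right displacement inside each parallelogram bounded by $(1+3\delta)L/\alpha_i$. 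If you replace your initial strip by the Lemma \ref{solid} argument and make the base-propagation explicit via such a one-dependent comparison (or an equivalent uniform lower-deviation bound for the right edge started from a finite interval, with errors summable over heights), the rest of your outline---the Riemann-sum comparison with $\Gamma_t$ and the interpolation using $g'=\alpha\circ f\le 1$---goes through.
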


Again we divide space into strips $\mathbb Z \times [z_i,z_{i+1})$, but now we lower bound the process by using homogeneous percolation with probability $p_i = \rho(z_i,t)$. In each strip we use a block construction to relate our process to a 1-dependent oriented bond percolation on $\ZZ^2$ with parameter $p =1-\epsilon$. On the renormalized lattice the right edge has speed close to 1. When we map the path of the right edge back to the Poisson perecolation process we get a piecewise linear function that serves as a lower bound on the location of $r_t(k)$. 

\subsection{Cluster density}

Let $P_p$ be the probability measure for oriented bond percolation on $\ZZ^2$, when edges are open with probability $p$. Let $\mathcal C_0$ be the open cluster containing the origin, and let $\theta(p) = P_p( |{\cal C}_0| = \infty)$. Let
$$
G(t,\eta) = \cup_{y=0}^{(1-\eta)N} [-(1-\eta)\Gamma_t(y), (1-\eta)\Gamma_t(y)] \times \{ y \}.
$$ 
Intuitively, our next result says that near $(x,y) \in G(t,\eta)$ the density of points in $\CC_0(t)$ will be close to $\theta(\rho(y,t))$. To state this precisely, fix $1/2 < a < 1$ and tile the plane with boxes of side length $N^{a}$: 
$$
R_{i,j} = [iN^a,(i+1)N^a] \times [jN^a,(j+1)N^a],
$$
and let $(x_{i,j},y_{i,j})$ be the center of $R_{i,j}$. Let $D_{i,j} = |\CC_0(t) \cap R_{i,j}|/N^{2a}$ be the fraction of points in $R_{i,j}$ that belong to ${\CC}_0(t)$ and let $\Lambda(t,\eta) = \{ (i,j) \colon R_{i,j} \subset G(t,\eta) \}$ be the indices of boxes that fit inside $G(t,\eta)$.

\begin{thm} \label{thm:oinside}
For any $\eta,\delta>0$, as $t\to\infty$,
$$
P\left( \sup_{(i,j) \in \Lambda(t,\eta)} |D_{i,j}(t) - \theta(\rho(y_{i,j},t))| > \delta \right) \to 0.
$$
\end{thm}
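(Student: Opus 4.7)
The plan is to compare the Poisson percolation configuration inside each box $R_{i,j}$ with $(i,j) \in \Lambda(t,\eta)$ to homogeneous oriented percolation at a constant parameter close to $p_{i,j} := \rho(y_{i,j},t)$, and then to combine the resulting density estimate with the coverage of $G(t,\eta)$ by $\CC_0(t)$ supplied by Theorem \ref{thm:RE}. A first observation is that $\rho$ is nearly constant on $R_{i,j}$: because $y_{i,j} \le (1-\eta)N$ and $N^a = o(N)$, smoothness of $\rho(\cdot,t)$ in $y$ gives $|\rho(y,t) - p_{i,j}| = O(N^{a-1})$ uniformly on $R_{i,j}$. Hence, for any fixed $\epsilon > 0$ and all large $t$, the edges with midpoints in $R_{i,j}$ can be coupled so as to sandwich the restricted Poisson process between homogeneous oriented bond percolations at parameters $p_{i,j} - \epsilon$ and $p_{i,j} + \epsilon$. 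Since $(i,j) \in \Lambda(t,\eta)$ forces $y_{i,j} \le (1-\eta)N$, which in turn forces $p_{i,j}$ to be bounded below by a constant strictly greater than $p_c$, both comparing processes are supercritical once $\epsilon$ is small relative to $\eta$.

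Next I would use that in supercritical homogeneous oriented percolation on $\ZZ^2$ with parameter $p$, the density of vertices in a box of side $N^a$ belonging to the infinite cluster concentrates at $\theta(p)$ with probability at least $1 - \exp(-cN^a)$; this follows from standard FKG and large-deviation arguments, together with continuity of $\theta$ on $(p_c,1]$. Consequently, the density of local infinite-cluster vertices inside $R_{i,j}$ under either sandwiching measure is within $\delta/2$ of $\theta(p_{i,j})$ with this probability. The most delicate point is then to show that, with high probability, this \emph{local} infinite cluster coincides with $\CC_0(t) \cap R_{i,j}$ up to a negligible set. For this I would reuse the block construction from the proof of Theorem \ref{thm:RE}: it dominates $\CC_0(t)$ from below by a $1$-dependent supercritical oriented percolation on a coarsened lattice, and with high probability produces an open path from $(0,0)$ that enters $R_{i,j}$ through its bottom edge. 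Uniqueness of the local giant component (via the standard sprinkling/renormalization argument inside a slightly enlarged box) then forces all but a negligible fraction of its vertices to be connected to this incoming path, and hence to the origin.

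Finally, a union bound over the $O(N^{2(1-a)})$ boxes in $\Lambda(t,\eta)$ is routine since the failure probability per box is stretched-exponentially small in $N^a$. The principal obstacle is the identification of the local giant with $\CC_0(t) \cap R_{i,j}$. In the unoriented companion paper \cite{poisson} this was handled by a renormalization showing that at the mesoscopic scale the cluster containing $(0,0)$ sweeps through every sub-box of the target region. Adapting this to the oriented setting requires extra care because connectivity can only propagate upward, so the argument has to feed the incoming path in through the bottom edge of each box while simultaneously maintaining density concentration on both sandwich measures; this is where most of the technical work will lie.
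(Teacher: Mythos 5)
There is a genuine gap at your key step, the identification of $\CC_0(t)\cap R_{i,j}$ with a ``local giant component.'' In the oriented setting there is no analogue of the unoriented uniqueness-of-the-giant/sprinkling argument you invoke: connectivity only propagates upward, so two vertices of the box that each have long upward (or long downward) open paths need not be connected to one another, and a \emph{single} open path fed in through the bottom edge of $R_{i,j}$ does not connect to ``all but a negligible fraction'' of any locally defined cluster. What actually characterizes $w\in\CC_0(t)$ is a \emph{directed, backward} condition: the dual (downward) cluster of $w$ must reach the cluster of the origin, which below the box occupies the full set of sites reachable from $\ZZ\times\{0\}$ between its left and right edges. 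The paper exploits exactly this: by Fact \ref{fact:survival}, if the dual cluster of $w$ survives $n_i=O(\log N)$ steps then $w\in\CC_0(t)$ except with probability $N^{-4}$ (statement $(\star)$), so membership in $\CC_0$ is replaced by the local event $A_w=\{\tau^w\ge n_i\}$, whose probability is within $O(N^{-4})$ of $\theta(\rho)$, again by Fact \ref{fact:survival}. Your proposal never supplies a substitute for this step, and you yourself flag it as the unresolved technical core; as written, the argument would fail because ``uniqueness of the local giant'' is simply not available for oriented percolation.

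A second, smaller issue is your claimed concentration bound $1-\exp(-cN^a)$ for the density of infinite-cluster vertices in a box, attributed to ``standard FKG and large-deviation arguments.'' No such off-the-shelf estimate is cited or proved, and none is needed: once membership is localized via the events $A_w$, these indicators are independent at distance $>2n_i$, so a second-moment computation gives $E\bar S_{j,k}^2 \le C N^{2a}\log^2 N$, and Chebyshev plus a union bound over the $O(N^{2-2a})$ boxes suffices precisely because $a>1/2$. Finally, note that the paper's proof does not route through Theorem \ref{thm:RE} box by box in the way you describe; Theorem \ref{thm:RE} only enters through the definition of $G(t,\eta)$ (guaranteeing the boxes sit well inside the supercritical region where the dual-survival argument applies), while the density statement itself comes from the dual localization, the weak-dependence second moment bound, and the Lipschitz continuity of $y\mapsto\theta(\rho(y,t))$ used to replace $P(A_w)$ by the representative value $\theta(\rho(y_{i,j},t))$. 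Your sandwiching of $\rho$ by $p_{i,j}\pm\epsilon$ on each box (Fact \ref{fact:mono2}) and the final union bound are consistent with the paper, but the heart of the proof is missing.
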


\subsection{Boundary fluctuations}

The first three results were laws of large numbers. We will now consider the fluctuations of the right edge $r_t(k)$. In the homogeneous case, Galves and Presutti \cite{GalPre} were the first to prove such a central limit theorem for the supercritical contact process. Their proof also applies to oriented percolation. It implies that, if we start with the nonpositive integers occupied, then there is a constant $\sigma(p)$ so that for all $k > 0$ as $n \to \infty$
$$
\frac{r_{kn} - \alpha(p)kn }{\sigma(p)\sqrt{n}} \Rightarrow B_s.
$$
Here $B_s$ standard Brownian motion and $\Rightarrow$ is weak convergence of stochastic processes.
Two years later Kuczek \cite{CLT} simplified the proof by introducing what he called break points: times $T_i$ at which 
the right-most particle starts an oriented percolation that does not die out. In this case for $i \ge 1$,
the increments $(r_{T_{i+1}}-r_{T_i}, T_{i+1}-T_i)$ are i.i.d. Using his method we prove the analogue for Poisson percolation.

\begin{thm} \label{edgef}
As $t \to \infty$,
$$
\frac{r_t(\lceil Nu\rceil) - \int_0^{Nu} \alpha(p(y,t))dy}{ N^{1/2} } \,   \Rightarrow W_u,
$$ 
where $W_u, 0 \le u < 1$ is a Gaussian process with independent increments. It holds that $EW_u=0$ and
$$
EW_u^2 = \frac{1}{N} \int_0^{Nu} \sigma^2(p(y,t)) \, dy.
$$
\end{thm}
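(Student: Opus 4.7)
The plan is to adapt Kuczek's break-point technique \cite{CLT} to the non-homogeneous setting of Poisson percolation. The driving heuristic is that on any vertical scale $1 \ll h \ll N$ the parameter $\rho(y,t)$ is nearly constant, so locally the process behaves like supercritical homogeneous oriented percolation and the homogeneous CLT of \cite{GalPre,CLT} can be applied strip-by-strip.

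First, partition $[0,\lceil Nu\rceil]$ into consecutive strips $S_j=[y_{j-1},y_j)$ of common height $h=N^a$ with $1/2<a<1$. Inside $S_j$ the parameter $\rho(y,t)$ differs from $p_j:=\rho(y_j,t)$ by $O(h/N)=O(N^{a-1})$, so the standard monotonicity coupling sandwiches the Poisson right edge between two homogeneous oriented right edges with parameters $p_j^-\le p_j\le p_j^+$ of this order. Within each homogeneous strip process, Kuczek's construction yields i.i.d.\ inter-break-point increments with exponential tails; the renewal CLT then says that the displacement $\Delta_j$ of the right edge across $S_j$ satisfies $E[\Delta_j]=\alpha(p_j)h+O(1)$ and $\mathrm{Var}(\Delta_j)=\sigma^2(p_j)h+o(h)$. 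A regeneration argument, restarting from the first break point strictly above each interface $y_j$, shows that the $\Delta_j$ are asymptotically independent modulo a per-strip cost of $O(1)$.

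Summing, the total mean is $\sum_j\alpha(p_j)h+O(M)=\int_0^{Nu}\alpha(\rho(y,t))\,dy+o(N^{1/2})$, where the error bound uses $a>1/2$ so that $M=N/h=o(N^{1/2})$, while the total variance is $\int_0^{Nu}\sigma^2(\rho(y,t))\,dy+o(N)$. Applying the Lindeberg-Feller CLT to the centered triangular array $\{\Delta_j-E[\Delta_j]\}/N^{1/2}$ gives the one-dimensional Gaussian limit at each $u$. Because strip contributions associated with disjoint intervals of $u$ arise from disjoint portions of the lattice, the limiting increments are independent. Finite-dimensional convergence together with a standard tightness estimate from moment bounds on the $\Delta_j$ yields weak convergence of processes.

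The main obstacle is controlling the interfaces between strips. Kuczek's construction is intrinsically tied to a homogeneous background, so at each height $y_j$ the break-point structure has to be re-initialized from the current position of the right edge. One must verify (i) that with high probability the Poisson right edge at height $y_j$ coincides with those of the two sandwiching homogeneous processes, which piggybacks on Theorems \ref{thm:ooutside} and \ref{thm:RE}; (ii) that the re-initialization delay at each interface has exponential tails uniformly in $j$; and (iii) that the accumulated $O(1)$ per-strip errors in mean and variance are $o(N^{1/2})$ and $o(N)$ respectively. Requirement (iii) is what pins down the exponent window $1/2<a<1$. Making (i) and (ii) uniform across all $M=N^{1-a}$ strips, together with carrying along the finite-exponential-moment estimate for the break-point renewal that underlies everything, is the delicate part of the argument.
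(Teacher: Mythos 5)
Your overall strategy---Kuczek break points, strip-wise comparison with homogeneous oriented percolation, Lindeberg--Feller, then independence of increments across disjoint regions---is the same as the paper's, but your implementation of the strips has a genuine gap in the error budget. You re-initialize the regeneration structure at each strip interface, which costs $O(1)$ per strip and therefore forces the number of strips $M=N^{1-a}$ to be $o(N^{1/2})$, i.e.\ $a>1/2$. But inside a strip of height $h=N^a$ the parameter varies by $O(N^{a-1})$, so your sandwich only pins the mean displacement across a strip to within $h\,|\alpha(p_j^+)-\alpha(p_j^-)|=O(N^{2a-1})$ (and even this presumes a Lipschitz bound for $\alpha$, which is not available; only continuity and monotonicity are known). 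Your claim $E[\Delta_j]=\alpha(p_j)h+O(1)$ is therefore unjustified: summing over the $M=N^{1-a}$ strips, the centering error coming from the sandwich is of order $N^{a}\gg N^{1/2}$ when $a>1/2$, which swamps the normalization $N^{1/2}$. Shrinking the strips to kill this error needs $a<1/2$, contradicting the requirement imposed by the per-interface costs. So with a single strip scale and per-strip re-initialization there is no admissible choice of $a$, and the error accounting cannot close as written.

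The paper avoids this tension by decoupling the two scales. The renewal structure is defined once for the whole inhomogeneous process: a renewal is a height at which the rightmost particle begins a path surviving at least $\log^2 N$ steps, verified by coupling with a homogeneous process on a triangle of height $2\log^2 N$, over which $\rho(y,t)$ changes only by $O((\log^2 N)/N)$. Hence the increments are $O(1)$-sized with exponential tails and nothing is re-initialized at strip boundaries, so there is no per-strip $O(1)$ cost. The strips are then used only to sandwich the conditional mean and variance of each individual increment: with $m=N^{0.6}$ strips chosen as level sets of the speed, $\alpha(p_i)=1-i/m$ (which sidesteps any Lipschitz assumption on $\alpha$), each increment's mean is pinned to within $1/m=N^{-0.6}$, and summing over the $O(N)$ increments gives a total centering error $O(N^{0.4})=o(N^{1/2})$. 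To repair your argument you would need to replace the per-strip regeneration by such a global renewal structure (or otherwise eliminate the $O(1)$ interface cost), since that is exactly what permits strips thin enough to control the drift of $\rho(y,t)$; the remaining steps of your proposal (Lindeberg--Feller, finite-dimensional convergence, tightness via martingale moment bounds) then align with the paper's proof.
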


\noindent
Given the result for the homogeneous case this conclusion is what one would expect to hold; if we divide the space into a large number of thin strips
we have a sequence of homogeneous oriented percolation processes

Very little is known rigorously about critical exponents for oriented percolation, so we are not able to prove mathemtically an analogue of
Nolin's result. However, we can give a physicist style derivation of the following:

\begin{conj}
Fluctuations in the height of $\CC_0(t)$ are of order $N^{0.634}$.
\end{conj}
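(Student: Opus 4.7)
The plan is to adapt Nolin's gradient-percolation scaling heuristic to the directed percolation (DP) universality class that governs critical oriented bond percolation in $1+1$ dimensions. The goal is to identify the scale $\ell$ (measured in the height direction) on which the process transitions from locally supercritical to locally subcritical; by Theorems \ref{thm:ooutside} and \ref{thm:RE} the boundary of $\CC_0(t)$ must live precisely on this scale.

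First, I would linearize $\rho(y,t)$ near $y=N$. Since $\rho(N,t)=p_c$ and $\partial_y \rho(y,t) = -\beta t y^{-\beta-1}\exp(-ty^{-\beta})$, a direct computation at $y=N$ using $tN^{-\beta}=\log(1/(1-p_c))$ gives $|\partial_y \rho(N,t)|=C_\beta/N$ with $C_\beta = \beta(1-p_c)\log(1/(1-p_c))$, so $\rho(y,t)-p_c \approx -C_\beta(y-N)/N$ for $y$ near $N$. This is exactly the linearly decreasing profile on scale $N$ that the gradient-percolation heuristic demands. Next, invoke the DP scaling picture: in homogeneous oriented percolation on ${\cal L}$ with parameter $p$ near $p_c$, the correlation length in the time (height) direction is $\xi_\parallel(p)\sim|p-p_c|^{-\nu_\parallel}$, where $\nu_\parallel \approx 1.7338$ is the DP correlation-length exponent in $1+1$ dimensions. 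A height displacement $\ell$ from $N$ produces $|p-p_c|\sim \ell/N$, so the local correlation length near height $N\pm \ell$ is of order $(\ell/N)^{-\nu_\parallel}$. The self-consistent fluctuation scale is obtained by demanding that $\ell$ equal this correlation length:
$$
\ell \sim (\ell/N)^{-\nu_\parallel}, \qquad \text{i.e.,}\qquad \ell \sim N^{\nu_\parallel/(1+\nu_\parallel)}.
$$
Plugging in $\nu_\parallel \approx 1.7338$ yields $\nu_\parallel/(1+\nu_\parallel)\approx 0.634$, as claimed.

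The hard part, and the reason this must remain a conjecture, is that rigorous values (or even a rigorous proof of existence) of the DP critical exponents in $1+1$ dimensions are beyond current technology. This contrasts sharply with Nolin's unoriented setting on the triangular lattice, where the Smirnov--Werner critical exponents \cite{SW} make the analogous heuristic rigorous. Even a conditional proof would require both the existence of $\nu_\parallel$ and a near-critical scaling relation for oriented percolation analogous to Kesten's near-critical scaling theory used implicitly in \cite{nolin}.
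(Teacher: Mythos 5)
Your derivation is the same heuristic the paper gives: Nolin's matching argument with the parallel correlation-length exponent, $|p-p_c|\sim \ell/N$ and $\ell \sim (\ell/N)^{-\nu_\parallel}$, giving $\ell\sim N^{\nu_\parallel/(1+\nu_\parallel)}\approx N^{0.634}$ with $\nu_\parallel\approx 1.7338$ from Jensen's numerics. Your explicit linearization of $\rho(y,t)$ at $y=N$ is a nice touch the paper only asserts implicitly ("nonlinear but smooth"), and your assessment of why this remains a conjecture matches the paper's discussion.
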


 First, recall that oriented percolation has two correlation lengths.
The correlation length in time, $L_{\parallel}$ can be defined simultaneously for the subcritical and supercritical cases by
$$
\gamma_{\parallel}(p) = \lim_{t\to\infty} \frac{1}{n} \log P( n \le \tau^0 < \infty ) \qquad L_{\parallel}(p) = 1/\gamma_{\parallel}(p).
$$
The correlation length in space $L_{\perp}$ has two different definitions for $p<p_c$ and $p> p_c$. Let $R^0 = \sup\{ x : x \in \xi^0_T \hbox{ for some $x$}\}$ and define
\begin{align*}
\gamma_{\perp}(p) = \lim_{t\to\infty} \frac{1}{n} \log P( R^0  \ge n) & \qquad L_{\perp}(p) = 1/\gamma_{\perp}(p) \\
\gamma_{\perp}(p)  = \lim_{t\to\infty} \frac{1}{n} \log P( \tau^{\{-n,\ldots n\}} < \infty ) & \qquad L_{\perp}(p) = 1/\gamma_{\perp}(p).
 \end{align*}
The last two limits and the one that defines $\gamma_{\parallel}$ when $p<p_c$ exist due to supermultiplicativity (e.g.\ $P( R^0 \ge m+n ) \ge
P( R^0 \ge m ) P( R^0 > n)$). See \cite{DST} for more details, and some other definitions. 

The corresponding critical exponents are defined by
$$
L_{\parallel}(p) \approx |p-p_c|^{-\gamma_{\parallel}}  \qquad L_{\perp}(p) \approx |p-p_c|^{-\gamma_{\perp}} .
$$
Here $\approx$ could be something as precise as $\sim C |p-p_c|^-{\gamma}$ or $\log L(p)/\log|p-p_c| \to \gamma$.
Numerically, see \cite[equation (15)]{Jensen}
$$
\gamma_{\parallel} = 1.733847 \qquad 2 \gamma_{\perp} = 2.193708.
$$

Nolin gives the following ``hand-waving'' argument for his result \cite[page 1756]{nolin}. If we are at distance $N^b$ behind the
front then $p - p_c = O(N^{b-1})$ and the correlation length is 
$$
|p-p_c|^{-\nu_{parallel}} = O(N^{(1-b)\nu_{parallel}})
$$
if $b=(1-b)\nu_{parallel}$, i.e., $b = \nu_{parallel}/(1+\nu_{parallel})$, then the correlation length matches the distance behind the 
front. In this case the physical interpretation of the correlation length implies that the percolation process will look like the
critical case. Nolin's proof of the localization of the front, see \cite[Theorem 6]{nolin}, is not long, but it is based on properties of
sponge crossing, which will be difficult to generalize to the oriented case. However, there has been some recent work in that
direction by Duminil-Copin et.\ al.\ \cite{DCetal}.

\section{Percolation toolbox}

Here we state additional definitions and facts that we will need in the proofs of our theorems. The first is a simple observation that percolation is monotonic in the parameter. 

\begin{fact} \label{fact:mono1}
Let $G_p\subseteq \mathbb Z^2$ be the random subgraph obtained in homogeneous oriented percolation with parameter $p$. There exists a coupling such that if $p<p'$, then $G_p \subseteq G_{p'}.$	
\end{fact}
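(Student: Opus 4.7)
The plan is to build a simultaneous coupling of all the graphs $\{G_p\}_{p \in [0,1]}$ on a single probability space using one i.i.d.\ family of uniform random variables, one per edge. Concretely, I will assign to each oriented edge $e$ of $\mathcal{L}$ an independent random variable $U_e$ uniformly distributed on $[0,1]$, and then define
\[
G_p = \{e : U_e \le p\}.
\]

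The first step is to check that each $G_p$ has the correct marginal law. For a single edge $e$, $P(U_e \le p) = p$, and since the $U_e$ are mutually independent across edges, the events $\{e \in G_p\}$ are independent with the right probability. Hence $G_p$ has the law of the subgraph generated by homogeneous oriented bond percolation with parameter $p$, as required.

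The second step is the monotonicity. If $p < p'$ and $e \in G_p$, then $U_e \le p < p'$, so $U_e \le p'$ and therefore $e \in G_{p'}$. This gives $G_p \subseteq G_{p'}$ pointwise on the sample space, not merely in distribution, which is exactly the conclusion of the fact.

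There is no real obstacle here; the only thing to note is that the same coupling works simultaneously for all $p \in [0,1]$, so the family $(G_p)_{p\in[0,1]}$ is monotone in $p$ almost surely. This is the standard ``standard coupling'' or ``uniform coupling'' for Bernoulli percolation, and we will reuse the same $U_e$'s later whenever we need to compare the Poisson percolation cluster to homogeneous percolation at different densities.
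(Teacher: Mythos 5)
Your proof is correct and is exactly the canonical coupling the paper invokes: the paper simply states that Fact \ref{fact:mono1} ``follows by coupling the Bernoulli random variables on each edge in the canonical way,'' and your uniform-variable construction $G_p=\{e:U_e\le p\}$ is the standard realization of that coupling. No differences worth noting.
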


This follows by coupling the Bernoulli random variables on each each edge in the canonical way. A similar statement holds in Poisson percolation.

\begin{fact} \label{fact:mono2}
Let $G(t)$ be the set of all open edges at time $t$ in Poisson percolation. Fix a subset of edges $H \subseteq \mathbb Z^2$ and let $$p^- = \min \{ \rho(\vec x,t) \colon \vec x \in H)\}, \quad p^+ = \max \{ \rho(\vec x,t) \colon \vec x \in H\}.$$
There exists a coupling such that $G_{p^-} \cap H \subseteq G(t) \cap H \subseteq G_{p^+} \cap H.$ 
\end{fact}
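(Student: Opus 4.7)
The plan is to build a single coupling by attaching one uniform random variable to each edge and reading off all three percolation processes from those uniforms. Concretely, for every edge $e$ with midpoint $\vec x_e$, let $U_e$ be an i.i.d.\ uniform on $(0,1)$. In homogeneous percolation with parameter $q$, declare $e$ open iff $U_e \le q$; this produces $G_q$ for every $q$ simultaneously, with the nested property of Fact \ref{fact:mono1} built in.

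Next I would represent Poisson percolation on the same uniforms. The edge at $\vec x_e$ is supposed to open after an exponential time $T_e$ of rate $\|\vec x_e\|^{-\beta}$, which we obtain by setting $T_e = -\|\vec x_e\|^{\beta}\log(1-U_e)$. Then $T_e \le t$ iff $U_e \le 1 - \exp(-t\|\vec x_e\|^{-\beta}) = \rho(\vec x_e,t)$, so $G(t) = \{e : U_e \le \rho(\vec x_e,t)\}$ has the correct distribution, and it has been realized on the same probability space as the entire family $\{G_q\}_{q\in[0,1]}$.

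Finally, I would read off the sandwich from the definitions of $p^{\pm}$. For any edge $e \in H$, $p^- \le \rho(\vec x_e,t) \le p^+$ by definition, so the three events $\{U_e \le p^-\} \subseteq \{U_e \le \rho(\vec x_e,t)\} \subseteq \{U_e \le p^+\}$ are nested. Intersecting with $H$ gives
\[
G_{p^-} \cap H \;\subseteq\; G(t) \cap H \;\subseteq\; G_{p^+} \cap H,
\]
as claimed.

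There isn't really a main obstacle here: the only thing to check is that the exponential representation $T_e = -\|\vec x_e\|^{\beta}\log(1-U_e)$ reproduces the correct marginal law of $G(t)$, which is immediate from the one-dimensional formula for $\rho$ in \eqref{rhoyt}, and the independence across edges comes for free from the independence of the $U_e$. Note that the same argument goes through verbatim for the oriented lattice $\mathcal{L}$, since only the midpoint-indexed rate function enters.
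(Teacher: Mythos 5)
Your coupling via a single uniform $U_e$ per edge, with $G_q=\{e: U_e\le q\}$ and $T_e=-\|\vec x_e\|^{\beta}\log(1-U_e)$ so that $G(t)=\{e: U_e\le \rho(\vec x_e,t)\}$, is correct and is exactly the ``canonical'' Bernoulli coupling the paper invokes without spelling out. No gaps; this matches the paper's (sketched) argument.
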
  

The estimate in \cite[(1) Section 7]{RD84} bounds the probability that there is a path from $\ZZ \times \{0\}$ to $\ZZ \times \{k\}$.

\begin{fact} \label{fact:subpath}
 Let $\xi_k^0$ be the set of vertices in $\ZZ \times \{k\}$ that connect to $\ZZ \times \{0\}$. For any $\delta >0$. there is a constant $\gamma = \gamma(\delta)>0$ so that 
$$
P_{p_c-\delta}(\xi_k^0 \neq \emptyset  ) \le e^{-\gamma k}.
$$
\end{fact}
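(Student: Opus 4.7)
The plan is to derive the fact from the classical exponential decay of the expected cluster size in subcritical oriented percolation, via Markov's inequality. Read $\xi_k^0 = \{x : (0,0) \to (x, k)\}$ as the cluster of the origin at height $k$, so that $\{\xi_k^0 \ne \emptyset\}$ is precisely the event that this cluster survives to height $k$. Markov then gives
$P_{p_c - \delta}(\xi_k^0 \ne \emptyset) = P_{p_c-\delta}(|\xi_k^0| \ge 1) \le E_{p_c-\delta}|\xi_k^0|$,
so it suffices to show that $E_p|\xi_k^0|$ decays exponentially in $k$ for every fixed $p < p_c$.

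Next I would prove submultiplicativity $E_p|\xi_{m+n}^0| \le E_p|\xi_m^0| \cdot E_p|\xi_n^0|$ by decomposing paths through an intermediate height. Every vertex $y$ with $(0,0) \to (y, m+n)$ lies on a path passing through some $(x, m)$ with $x \in \xi_m^0$, and the tail uses only edges strictly above height $m$; this gives the pointwise bound $|\xi_{m+n}^0| \le \sum_{x \in \xi_m^0} |\xi_n^{(x,m)}|$, where $\xi_n^{(x,m)}$ counts descendants of $(x,m)$ at height $m+n$ using only edges above height $m$. These upper edges are independent of the edges below, and $|\xi_n^{(x,m)}|$ has the same law as $|\xi_n^0|$ by horizontal translation invariance, so conditioning on $\xi_m^0$ and taking expectations yields the claim. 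Fekete's lemma then produces $\mu(p) \in (0, 1]$ with $E_p|\xi_k^0| \le \mu(p)^k$.

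The main obstacle is the remaining strict inequality $\mu(p) < 1$ for every $p < p_c$---that is, sharpness of the subcritical phase for oriented percolation---since submultiplicativity alone only gives $\mu(p) \le 1$. The standard finite-size criterion reduces matters to exhibiting a single $m$ with $E_p|\xi_m^0| < 1$, and showing that this condition holds throughout $p < p_c$ requires a genuinely non-trivial argument (contour estimates on the dual lattice, or control of the right-edge speed $\alpha(p)$ via renormalization). Fortunately this is exactly what is established in \cite[Sec.\ 7]{RD84}, so in the present paper I would simply quote the bound and set $\gamma(\delta) = -\log\mu(p_c - \delta) > 0$.
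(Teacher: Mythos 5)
Your proposal is correct in outline, but it ultimately rests on exactly the same external input as the paper, so the extra machinery does not buy anything: the paper's entire ``proof'' of this Fact is the citation to \cite[(1), Section 7]{RD84}, where the exponential bound on the survival probability of the single-site cluster for $p<p_c$ is established directly (essentially via the right-edge representation: negative edge speed below $p_c$ together with the large-deviation estimate quoted here as Fact \ref{fact:edgeLD}). Your route --- Markov's inequality plus submultiplicativity of $E_p|\xi^0_k|$ and a finite-size criterion --- is the standard mean-cluster-size path one would take if the available black box were a sharpness theorem rather than the survival bound itself; since you end by quoting \cite[Section 7]{RD84} for the sharpness step, and that section already contains the survival bound verbatim (your input $E_p|\xi^0_m|<1$ would in practice be extracted from it via $E_p|\xi^0_m|\le (m+1)P_p(\xi^0_m\ne\emptyset)$), the reduction is circular in a harmless way: correct, but redundant relative to simply citing the estimate as the paper does. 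Two smaller points. First, you rightly read $\xi^0_k$ as the cluster of a single site; as literally worded in the Fact (started from all of $\ZZ\times\{0\}$) the bound would fail, and the single-site reading is what is actually used in \eqref{bhgt}. Second, your Fekete step is stated backwards: subadditivity of $\log E_p|\xi^0_k|$ gives $E_p|\xi^0_k|\ge \mu(p)^k$ (and not even $\mu\le 1$ is free), whereas the exponential upper bound comes straight from submultiplicativity once $E_p|\xi^0_m|<1$ for a single $m$, namely $E_p|\xi^0_{km+r}|\le (r+1)\bigl(E_p|\xi^0_m|\bigr)^{k}$; this bound carries a constant, so the final exponent should be taken slightly smaller than $-\tfrac1m\log E_p|\xi^0_m|$ (rather than your $-\log\mu(p_c-\delta)$), absorbing the constant and the finitely many small $k$ using $P\le 1-(1-p)^2<1$.
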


We are also interested in the speed of the rightmost particle in supercritical homogeneous percolation where we assume all edges in $(-\infty,0] \times \{0\}$ are open. \cite[(2) Section 7]{RD84} gives the following estimate.

\begin{fact} \label{fact:edgeLD}
If $p> p_c$ and $\beta>\alpha(p)$, then there are constants $0< \gamma, C < \infty$ which depend on $p$, and $\beta$ so that
$$
P_p( r_k > \beta n ) \le C e^{-\gamma k}.
$$
\end{fact}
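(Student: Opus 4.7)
The plan is to combine a subadditivity estimate for the right edge with the trivial upper bound $r_k \le k$, and then run a Chernoff argument on blocks. I will treat the bound as asserting $P_p(r_k > \beta k) \le C e^{-\gamma k}$.

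The first step is to establish the subadditive/restart inequality
\[
r_{m+n} \le r_m + \tilde r_n,
\]
where $\tilde r_n$ is an independent copy of $r_n$. This comes from the Markov property: given $r_m$, the configuration of open edges above height $m$ is independent of the one below, and the rightmost point in $\mathbb Z \times \{m+n\}$ reachable from $(-\infty,0]\times\{0\}$ is bounded above by $r_m$ plus the rightmost point reachable from $(-\infty,r_m]\times\{m\}$ using only fresh edges. Iterating with $k = jm + s$, $0 \le s < m$, gives $r_k \le \sum_{i=1}^{j} X_i + m$, where the $X_i$ are i.i.d.\ copies of $r_m$.

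The second step is to choose the block size. Because $r_n/n \to \alpha(p)$ a.s.\ and $r_n \le n$ (each oriented edge moves the $x$-coordinate by at most one), the dominated convergence theorem gives $E r_n / n \to \alpha(p)$. Fix $m$ large enough that $E r_m \le \beta' m$ with $\beta' = (\alpha(p) + \beta)/2 < \beta$.

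The third step is the Chernoff bound. Since $r_m \le m$, the random variable $r_m - \beta m$ is bounded above by $(1-\beta) m$ and has mean $\le (\beta' - \beta) m < 0$, so for all $\theta>0$ sufficiently small,
\[
\Lambda(\theta) := \log E \exp\bigl(\theta(r_m - \beta m)\bigr) < 0,
\]
by a Taylor expansion at $\theta=0$. Then
\[
P\bigl(r_k > \beta k\bigr) \le P\Bigl(\sum_{i=1}^{j}(X_i - \beta m) > -m\Bigr) \le e^{\theta m} \bigl(e^{\Lambda(\theta)}\bigr)^{j},
\]
which is at most $C e^{-\gamma k}$ with $\gamma = -\Lambda(\theta)/m > 0$ and $C = e^{\theta m + |\Lambda(\theta)|}$. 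The dependence of $\gamma$ and $C$ on $p$ and $\beta$ enters through the choice of $m$ and $\theta$.

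The main obstacle is purely notational: verifying the restart inequality requires being careful that one may use the value $r_m$ as a stopping location for a strong Markov style decomposition (this uses that $\{r_m \le x\}$ depends only on edges at heights $\le m$, which is standard for oriented percolation). Everything else is routine: the boundedness $r_m \le m$ gives finite moment generating function for free, so there is no tail issue to control, and supercriticality is used only to ensure $r_n$ is finite (starting from the infinite half-line there is always a rightmost occupied site at each height).
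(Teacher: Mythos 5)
Your proof is correct, and it is essentially the argument behind the paper's own treatment: the paper gives no proof of this fact but quotes it from Durrett (1984), Section 7, where the bound is obtained by exactly your route --- the restart inequality $r_{m+n}\le r_m+\tilde r_n$ with the second piece independent, a block length $m$ chosen so that $E r_m\le \beta' m<\beta m$, and an exponential Chebyshev estimate using that $r_m\le m$ makes the moment generating function finite for $\theta>0$ (and your reading of the statement's typo ``$\beta n$'' as ``$\beta k$'' is the intended one). Two steps are phrased loosely but are easily repaired: to get $E r_m\le\beta' m$ you only need Fatou's lemma with the one-sided bound $r_m/m\le 1$ (dominated convergence as invoked would also require control of the negative tail, which holds but is unnecessary), and for $\Lambda(\theta)<0$ you should not differentiate at $0$ directly but instead note that, with $X=r_m-\beta m\le(1-\beta)m$ and $EX<0$, the difference quotients $(e^{\theta X}-1)/\theta$ are nondecreasing in $\theta$ by convexity and bounded above on $(0,1]$, so monotone convergence gives $E e^{\theta X}\le 1+\theta\, EX/2<1$ for all small $\theta>0$.
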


We will make use of the \emph{dual process} to oriented homogeneous percolation when proving Theorem 2. This is the process obtained by keeping the same edges open, but reversing the orientation of $\mathbb Z^2$ so that edges point southwest and southeast. This will be useful because we can deduce an edge is likely to be open by following its dual process for $O(\log n)$ steps. Let $\tau^w$ denote the vertical distance covered by the cluster started at $w$ in the dual process. Results in \cite[Section 12]{RD84} imply that, for homogeneous percolation with parameter $p$,  the dual cluster at a site is exponentially unlikely to be large and finite.

\begin{fact} \label{fact:survival}
$P(k \leq \tau^{w} < \infty) \leq C e^{-\gamma k}.$
\end{fact}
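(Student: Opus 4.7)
The plan is to reduce Fact \ref{fact:survival} to the exponential decay of finite supercritical clusters in oriented percolation. By the self-duality of oriented bond percolation on $\mathbb Z^2$ (obtained by reversing orientations), the dual cluster rooted at $w$ has the same distribution as the forward cluster starting from a single site. It therefore suffices to show that, in homogeneous oriented percolation with parameter $p > p_c$,
$$
P(k \le \tau^0 < \infty) \le C e^{-\gamma k},
$$
where $\tau^0$ denotes the extinction time of the forward cluster from the origin. Since $P(\tau^0 = \infty) = \theta(p)$, this is equivalent to exponential convergence of $P(\tau^0 \ge k)$ to $\theta(p)$.

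The approach is a two-step block renormalization. First, invoke the block construction for supercritical oriented percolation: for any $p > p_c$ and any $\epsilon > 0$, one can choose spatial and temporal block scales $L$ and $T$ so that a ``good block'' event---the existence of suitable open crossings of an $L \times T$ rectangle that propagate into neighboring blocks---occurs with probability at least $1 - \epsilon$ and is measurable with respect to edges in a bounded region. This stochastically dominates the renormalized process by a $1$-dependent supercritical oriented site percolation on $\mathbb Z^2$ with density $1 - \epsilon$. Under this coupling, the event $\{k \le \tau^0 < \infty\}$ implies that the renormalized cluster has height at least $\lfloor k/T \rfloor$ but is finite.

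Second, apply a standard Peierls contour argument on the renormalized lattice. Any finite cluster of height at least $m$ in $1$-dependent oriented site percolation with density $1 - \epsilon$ must be separated from infinity by a dual blocking contour of length at least $cm$. The number of such contours grows at most like $\mu^\ell$ for a lattice-dependent constant $\mu$, while $1$-dependence forces each specific contour of length $\ell$ to have probability at most $(C\epsilon)^\ell$. Choosing $\epsilon$ small enough that $\mu C\epsilon < 1$ gives exponential decay in $m$, and hence in $k$.

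The main obstacle is the block construction itself: establishing the finite-size criterion that converts $p > p_c$ into the existence of an $\epsilon$-good renormalization at some finite scale. This is the delicate but classical ingredient, and in our setting is exactly the type of argument carried out in \cite[Section 12]{RD84}. Once the renormalization is in place, the Peierls step is routine and yields the desired bound.
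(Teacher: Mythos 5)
You should note first that the paper does not prove Fact \ref{fact:survival} at all: it is quoted from \cite[Section 12]{RD84}, so the comparison below is with the argument in that source. Your self-duality reduction is fine (the paper itself observes that the dual has the same law as forward oriented percolation), and the Peierls/contour estimate for $1$-dependent oriented percolation at density $1-\epsilon$ is indeed routine. The genuine gap is the transfer step, namely the claim that under the block coupling the event $\{k \le \tau^0 < \infty\}$ forces the renormalized cluster to have height at least $\lfloor k/T\rfloor$ while being finite. The block construction gives only a one-way comparison: a good block is a crossing event of the configuration in a fixed parallelogram, and a path of good blocks, suitably seeded by the cluster of the origin, produces open paths in the original lattice. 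Hence ``original cluster dies'' does imply that the seeded renormalized cluster dies, but ``original cluster reaches height $k$'' does \emph{not} imply that the renormalized cluster reaches height $k/T$: the original cluster can climb to height $k$ through a thin corridor while every block it meets fails to be good, in which case the renormalized cluster has height zero. On that scenario $\{k\le\tau^0<\infty\}$ is only contained in the event that the renormalized cluster is finite, whose probability is bounded away from $0$ but is not small in $k$, so your contour bound yields no decay in $k$. As written, the argument therefore does not prove the fact.

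The standard way to close this gap (and essentially what \cite[Section 12]{RD84} does) is different in structure: one shows that on $\{\tau^0\ge k\}$, except on an event of probability at most $Ce^{-\gamma k}$, the cluster of the origin at height of order $k$ contains order $k$ sites. This uses the identity that, as long as the cluster of the origin survives, its right and left edges coincide with those of the processes started from the half-lines, together with large-deviation estimates for the edge speed (the analogues of Facts \ref{fact:edgeLD} and \ref{fact:reld}, the lower-deviation bound being exactly where the renormalization of \cite{RD84} is used) and comparison with the process started from all of $\ZZ$ to get a positive density of occupied sites between the edges. One then applies the estimate that a set of $m$ sites fails to percolate with probability at most $Ce^{-\gamma m}$ (Fact \ref{fact:subsets}), which is where the contour argument lives. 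In other words, renormalization and Peierls enter only through these two ingredients; they are not applied to a direct height comparison between the original and renormalized clusters, because no such comparison holds. If you reorganize your proof along these lines, the rest of your sketch (self-duality, block construction, contour bound for the $1$-dependent process) can be kept.
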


Note that the dual process has the same law as usual oriented percolation. Thus, Fact \ref{fact:survival} also holds for the vertical distance of a component started at $x$ in the usual homogeneous oriented percolation. 

Supercritical percolation almost surely contains an infinite component. Translation invariance of the lattice ensures that an individual edge has probability $\theta(p)$ of belonging to this component. Despite correlations between the inclusion of edges in this component, subsets $H \subseteq \ZZ^2$ are exponentially likely as a function of their size to intersect the infinite cluster. This is proven in \cite[Section 10]{RD84}. Let $\tau^H$ denote the length of the longest surviving path started from an edge in $H$.

\begin{fact} \label{fact:subsets}
There exists $0<\gamma,C<\infty$ that depend on $p$ such that for any $H \subseteq \ZZ^2$ it holds that
$$P(\tau^H < \infty ) \leq C e^{-\gamma |H|}.$$
\end{fact}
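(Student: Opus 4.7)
The plan is to prove this fact via a block renormalization argument that reduces the event to a bound on independent high-density oriented Bernoulli percolation on a renormalized lattice.

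First, I would invoke the standard block construction for supercritical oriented percolation (cf.\ \cite[Section 10]{RD84}). For any small $\epsilon>0$ one chooses a block side length $L=L(\epsilon)$ and defines ``good'' $L$-blocks so that (a) each block is good with probability at least $1-\epsilon$, (b) goodness depends only on edges in a bounded neighborhood of the block, making the indicators $k$-dependent, and (c) inside a good block there is a ``spanning cluster'' of positive density (say $\theta(p)/2$ of the block's sites for $L$ large) which is connected to the spanning clusters of adjacent good blocks. By Liggett-Schonmann-Stacey stochastic domination, this $k$-dependent block process dominates independent Bernoulli$(q)$ oriented percolation on the renormalized lattice with $q(\epsilon)\to 1$ as $\epsilon\to 0$; pick $\epsilon$ so that the probability a given block's renormalized cluster is infinite is at least $1-\delta$ for a chosen small $\delta>0$.

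Second, if $w$ lies in a block $B$ that is good and whose renormalized cluster is infinite, then the spanning cluster of $B$ connects through a chain of good blocks to infinity, so every site of that spanning cluster has an infinite cluster in the original lattice. Since the spanning cluster occupies a positive fraction of $B$'s sites, any fixed $w\in B$ has probability at least $\kappa(p)>0$ of belonging to it. Combining, the event that the block $B$ witnesses $\tau^w=\infty$ for some $w\in H\cap B$ has probability at least $\eta:=\kappa(p)(1-\delta)(1-\epsilon)>0$, uniformly in $w$ and $B$.

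Third, partition $H$ across blocks: $\mathcal{B}(H):=\{B : H\cap B\neq\emptyset\}$ satisfies $|\mathcal{B}(H)|\geq|H|/L^2$. By a $(2k+1)^2$-coloring of the renormalized lattice, I extract a subcollection $\mathcal{B}'(H)\subseteq\mathcal{B}(H)$ of pairwise-distant blocks with $|\mathcal{B}'(H)|\geq|\mathcal{B}(H)|/(2k+1)^2$, so that their localized block events are mutually independent. On $\{\tau^H<\infty\}$, every $B\in\mathcal{B}'(H)$ must fail to witness an infinite cluster through any site of $H\cap B$, and these failures are independent and each of probability at most $1-\eta$. Hence
\begin{equation*}
P(\tau^H<\infty)\leq (1-\eta)^{|H|/((2k+1)L)^2}\leq Ce^{-\gamma|H|},
\end{equation*}
with $\gamma=-\log(1-\eta)/((2k+1)L)^2$.

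The main obstacle lies in the localization of the block events in the final step: the event ``the renormalized cluster of $B$ is infinite'' is global rather than local. To handle this, one replaces ``infinite'' with ``reaches the boundary of a renormalized ball of radius $M$ around $B$,'' which is decidable within scale $M$ and approximates the true infinite-cluster event up to error $Ce^{-\gamma'M}$ via Fact \ref{fact:survival} applied at the renormalized scale. Choosing $M$ large enough to absorb this error into the exponential bound produces the claimed estimate.
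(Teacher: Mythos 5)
There is a genuine gap, and it sits exactly where you flag it: the localization of the per-block witness events. Your witness event for a block $B$ (``$B$ is good, its renormalized cluster is infinite, and some $w\in H\cap B$ lies in the spanning cluster'') is not local, and the proposed repair by truncating at renormalized radius $M$ does not deliver the claimed bound. First, after truncation the witness events for different blocks are independent only if the blocks are separated by distance of order $M$, so a fixed $(2k+1)^2$-coloring no longer suffices and the number of usable blocks drops to about $|H|/(L^2M^2)$. Second, on $\{\tau^H<\infty\}$ a block can still satisfy the truncated witness (its cluster reaches renormalized distance $M$ and dies later), so the correct conclusion of your scheme is only $P(\tau^H<\infty)\le (1-\eta)^{n'}+n'Ce^{-\gamma' M}$, where $n'$ is the number of selected blocks. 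With $M$ fixed the error term does not decay at all; making $M$ grow to absorb it forces roughly $M^3\gtrsim |H|$, which gives a stretched-exponential bound of order $e^{-c|H|^{1/3}}$, not $Ce^{-\gamma|H|}$. (A smaller point: the uniform bound $\eta=\kappa(p)(1-\delta)(1-\epsilon)$ multiplies probabilities of dependent events; since they are increasing you could invoke Harris/FKG, but as written the step is not justified.)

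The deeper reason no such block-counting argument can close this gap is that the statement, read literally for arbitrary $H\subseteq\ZZ^2$, is false: take $H$ to be a $\sqrt{|H|}\times\sqrt{|H|}$ box of sites. Every infinite oriented path from $H$ must use one of the $O(\sqrt{|H|})$ edges crossing the level just above the box within the horizontally reachable window, so closing all of them kills every path from $H$, giving $P(\tau^H<\infty)\ge e^{-c\sqrt{|H|}}$, which beats $Ce^{-\gamma|H|}$ for large $|H|$. The exponential-in-$|H|$ estimate is really a statement about horizontally spread sets; in this paper it is only applied to $\mathcal E_K=[q'K,K]\times\{0\}$, an interval of sites in a single row, and the paper gives no proof of the Fact but quotes the corresponding estimate from Section 10 of Durrett (1984), where the exponential bound is established for percolation started from an interval (via the right-edge and restart machinery for the supercritical process), not by a two-dimensional block count. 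To repair your write-up, restrict $H$ to a subset of a horizontal row (or reformulate the exponent in terms of the horizontal extent of $H$) and either cite that one-dimensional estimate or reproduce its argument; the renormalization you set up is then unnecessary for this Fact, although it is, of course, the right tool elsewhere in the paper (Section 4.1).
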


Our proof involves \emph{one-dependent oriented percolation}. One-dependence means that the values on edges that share a common vertex are correlated, but edges without a common vertex are independent. This type of percolation is analyzed in \cite{RD84}. Consider one-dependent oriented percolation in which the marginal distribution for each edge is such that it is open with probability at least $1-\ep$.
Let ${\cal C} = \{ w \colon \hbox{ for some } x \le 0, (x,0) \to w \}$, and let $s_k = \sup\{ x \colon (x,k) \in {\cal C} \}$. According to \cite[Theorem 2; Section 11]{RD84}, 

\begin{fact} \label{fact:reld}
If $0< q< 1$ and $\ep < 3^{-36/(1-q)}$, then there are constants $0 < \gamma, C < \infty$ so that
$$
P( s_k \le qk ) \le Ce^{-\lambda k}.
$$
\end{fact}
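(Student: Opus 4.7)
My approach would be a Peierls-type contour argument, adapted to the one-dependent setting. First, I would observe that the event $\{s_k \le qk\}$ forces a deficit of open edges along any rightward path from the nonpositive integers: the rightmost reachable point at level $k$ lies at horizontal position at most $qk$, even though a fully open configuration would propagate to horizontal position $k$. By a planar duality argument specific to oriented bond percolation on $\mathcal L$, this implies the existence of a ``blocking'' dual path of closed edges separating the wet cluster from the sites $(x,k)$ with $x > qk$. The horizontal span that this blocking path must cover is at least $(1-q)k$, so in particular it must contain at least $c(1-q)k$ edges for some absolute constant $c>0$.

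Next, I would carry out the standard counting: the number of oriented dual paths of length $n$ rooted near a fixed point is at most $3^n$, since at each step there are at most three possible choices for the next edge on $\mathcal L$; accounting for possible starting positions along the boundary of the wet region contributes only a polynomial factor in $k$. If the edges were independent, the probability that every edge on a fixed dual path of length $n$ is closed would be at most $\epsilon^n$, and summing $3^n \epsilon^n$ over $n \ge c(1-q)k$ would yield the desired bound as soon as $3\epsilon < 1$.

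The main obstacle is the one-dependence: neighboring dual edges are correlated, and their closed-nesses cannot be multiplied directly. I would handle this by selecting a sub-collection of every $d$-th edge along the blocking path, with $d$ chosen large enough that no two selected edges share a common vertex in $\mathcal L$. The resulting sub-collection is mutually independent, has size at least $n/d$, and its joint probability of being closed is at most $\epsilon^{n/d}$. Combining this with the counting bound gives $3^n \epsilon^{n/d}$, which is geometrically summable precisely when $\epsilon^{1/d} < 1/3$. A careful choice of $d$, together with the linear lower bound $c(1-q)k$ on the contour length, is what produces the explicit threshold $\epsilon < 3^{-36/(1-q)}$ stated in the fact and fixes the decay rate $\lambda = \lambda(q) > 0$. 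The most delicate step is bookkeeping these constants so that the product of the counting factor, the polynomial prefactor, and the $\epsilon$-term is summable over $n$; everything else is routine.
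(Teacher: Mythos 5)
You should first note that the paper contains no proof of this fact at all: it is quoted directly from \cite[Theorem 2; Section 11]{RD84}, and the argument there is a Peierls-type contour bound with a thinning step to handle one-dependence. So your overall strategy (contour counting at rate $3^n$, plus selecting a spaced-out subfamily of contour edges to restore independence) is the canonical one, not a new route.

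However, the geometric heart of your sketch is not right, and the error shows up precisely in the constant you are supposed to reproduce. You claim that $\{s_k\le qk\}$ forces a dual blocking path of length at least $c(1-q)k$ \emph{all} of whose edges are closed, and you then sum $3^n\epsilon^{n/d}$ over $n\ge c(1-q)k$. If that accounting were correct, the resulting hypothesis would be a $q$-independent condition of the form $\epsilon<3^{-d}$, not $\epsilon<3^{-36/(1-q)}$. In the oriented contour argument the contour bounding the wet region has length of order $k$ (it must track the cluster boundary all the way up to height $k$), and only a fraction of its steps are forced to cross closed bonds: steps in the ``drift'' direction cost nothing, and the deficit condition $s_k\le qk$ only forces an excess of order $(1-q)k$ of leftward steps, each of which produces a closed bond. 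After thinning by a constant factor to extract mutually independent bonds (one-dependence), the probability of a fixed contour of length $n$ is bounded by roughly $\epsilon^{(1-q)n/36}$, and the sum $\sum_{n\gtrsim k}3^n\epsilon^{(1-q)n/36}$ is what yields exponential decay exactly under $3\,\epsilon^{(1-q)/36}<1$, i.e.\ $\epsilon<3^{-36/(1-q)}$. So the $(1-q)$ belongs in the \emph{exponent of $\epsilon$ per unit contour length}, not in the \emph{length} of an all-closed path; conflating the two is a genuine gap, and deferring the constants to ``careful bookkeeping'' skips the only nontrivial content of this quantitative statement. The fix is exactly the accounting carried out in \cite{RD84}: bound the number of forced closed bonds from below by the leftward-step excess, thin for one-dependence, and sum over contour lengths $n$ of order at least $k$.
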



\section{Proof of the Theorem \ref{thm:ooutside}}

We start by proving (i). Let $\delta >0$ be small. For $i=1,2$, let $y_i = \lceil n(p_c-i\delta,t) \rceil$. On $(y_1,\infty)$ we use Fact \ref{fact:mono1} to dominate Poisson percolation by homogeneous percolation in which bonds are open with probability $p_c-\delta$. We have $y_i \sim c_i t^{1/\alpha}$ for constants $c_1 \leq c_2$. Let $k = y_2-y_1$. Note that at height $y_1$, all the $x$-coordinates of points of $\CC_0(t)$ are in $[-y_1,y_1]$. It follows from
 Fact \ref{fact:subpath} that for large $t$
\beq
P( \CC_0(t) \cap (\ZZ \times \{y_2\}) \neq \emptyset ) \le 2c_1 t^{1/\alpha} \exp( - \gamma (c_2-c_1) t^{1/\alpha} ) \to 0.
\label{bhgt}
\eeq
If $\delta$ is small, then  $y_2 < (1+\eta)N$ and we have the desired upper bound on the height.

Theorem \ref{thm:ooutside} (ii) follows from the following two lemmas.
We subdivide time by introducing probabilities $p_i$, $1 \le i \le m-1$ so that $\alpha(p_i) = 1 - i/m$, and let $p_0=1$,  $p_m=p_c-2\delta$. We will choose the value of $m$ appropriately for $\eta$ in just moment.  
Let $z_i = n(p_i,t)$. The last interval $(z_{m-1},z_m]$ is longer so that $z_m = y_2$.
When $z_{i} < y \le z_{i+1}$, we use Fact \ref{fact:mono2} to bound our system from above by oriented percolation with probability $p_i$, which has edge speed
$=  1 - i/m$. 

We define sequences $u_i$, $v_i$ for $0 \le i \le m-1$ inductively by $u_0 = \delta$
$$
v_i = u_i + (1-i/m)(z_{i+1}-z_i), \qquad u_{i+1} = v_i + \delta.
$$
Now define a function $h_t(x)$ to be linear on $[z_i,z_{i+1})$, with $h_t(z_i) = u_i$ and 
$$
\lim_{y \uparrow z_{i+1}} h_t(y):= h_t(z_{i+1}-)   = v_i.
$$

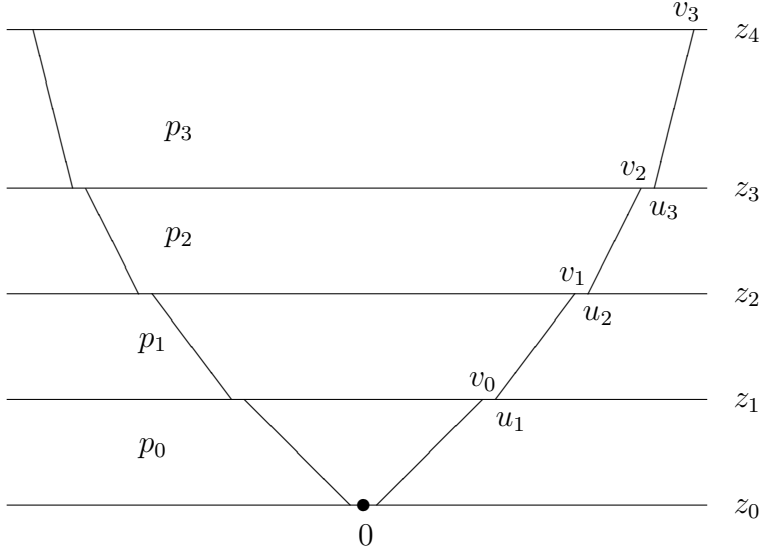
\begin{figure}[h]
\begin{center}
\begin{picture}(320,220)
\put(147,27){$\bullet$}
\put(148,15){0}
\put(155,30){\line(1,1){40}}
\put(190,75){$v_0$}
\put(200,70){\line(3,4){30}}
\put(200,60){$u_1$}
\put(224,115){$v_1$}
\put(235,110){\line(1,2){20}}
\put(233,100){$u_2$}
\put(247,155){$v_2$}
\put(260,150){\line(1,4){15}}
\put(258,140){$u_3$}
\put(267,215){$v_3$}
\put(145,30){\line(-1,1){40}}
\put(100,70){\line(-3,4){30}}
\put(65,110){\line(-1,2){20}}
\put(40,150){\line(-1,4){15}}
\put(15,210){\line(1,0){265}}
\put(290,207){$z_4$}
\put(15,150){\line(1,0){265}}
\put(290,147){$z_3$}
\put(15,110){\line(1,0){265}}
\put(290,107){$z_2$}
\put(15,70){\line(1,0){265}}
\put(290,67){$z_1$}
\put(15,30){\line(1,0){265}}
\put(290,27){$z_0$}
\put(65,50){$p_0$}
\put(65,90){$p_1$}
\put(75,130){$p_2$}
\put(75,170){$p_3$}
\end{picture}
\caption{Region defined by $h_t(k)$ when $m=4$. Notice that the slopes 
of the segments $(u_i,v_i)$ are 1, 4/3, 2, and 4, i.e., 1 over the maximum edge speed in the interval.}
\end{center}
\label{fig:ht}
\end{figure}

\begin{lemma} \label{xub}
As $t\to\infty$,
$
P( r_k(t) \le h_t(k) \hbox{ for all $k\le z_m$}) \to 1.
$
\end{lemma}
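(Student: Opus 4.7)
The plan is to use induction on the strip index $i$, combining the monotone coupling of Fact \ref{fact:mono2} with the large-deviation estimate of Fact \ref{fact:edgeLD}. Within each strip $S_i = \ZZ\times[z_i,z_{i+1})$, every edge is open in the Poisson percolation with probability at most $p_i=\rho(z_i,t)$, so Fact \ref{fact:mono2} furnishes a coupling $G(t)\cap S_i\subseteq \hat G_i$, where $\hat G_i$ is homogeneous oriented percolation with parameter $p_i$. Since distinct strips share no edges, these couplings can be chosen mutually independent.

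For the induction, the base case $r_t(0)=0\le u_0=\delta$ is trivial. Assuming $r_t(z_i)\le u_i$, every point of $\CC_0(t)\cap S_i$ is reached by an $\hat G_i$-path from some vertex of $\CC_0(t)\cap(\ZZ\times\{z_i\})\subseteq(-\infty,u_i]\times\{z_i\}$, so for $0\le j<z_{i+1}-z_i$,
\[
r_t(z_i+j)\ \le\ u_i+\tilde r_j^{(i)},
\]
where $\tilde r_j^{(i)}$ is the right edge at height $j$ of homogeneous percolation with parameter $p_i$ started from $(-\infty,0]\times\{0\}$. Choosing $\beta_i$ slightly above $\alpha(p_i)=1-i/m$, Fact \ref{fact:edgeLD} gives $P(\tilde r_j^{(i)}>\beta_i j)\le C_i e^{-\gamma_i j}$. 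A union bound over $j$ in the strip (with the deterministic bound $\tilde r_j^{(i)}\le j$ handling small $j$) and over the $m$ strips then drives the failure probability to $0$, yielding $r_t(z_i+j)\le u_i+(1-i/m)j=h_t(z_i+j)$ throughout the strip and in particular $r_t(z_{i+1})\le v_i\le u_{i+1}$, closing the induction.

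The main obstacle is calibrating the LD gap $\beta_i-\alpha(p_i)$ against the slack $\delta$: a fixed gap $\beta_i-\alpha(p_i)=\epsilon>0$ produces an overshoot $\epsilon(z_{i+1}-z_i)=\Theta(t^{1/\beta})$ per strip, dwarfing the constant room $\delta$ between $v_i$ and $u_{i+1}$. One therefore has to let $\beta_i\downarrow\alpha(p_i)$ as $t\to\infty$ while ensuring that the LD rate $\gamma_i=\gamma(\beta_i,p_i)$ shrinks slowly enough that $\gamma_i(z_{i+1}-z_i)\to\infty$, keeping the union bound viable. Because $m$ is fixed (to be chosen as a function of $\eta$ when proving Theorem \ref{thm:ooutside}(ii)), only finitely many strip-level estimates need to be combined, and a careful balance between the shrinking gap and the quantitative LD constants underlying Fact \ref{fact:edgeLD} from \cite{RD84} is what ultimately closes the argument.
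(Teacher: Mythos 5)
Your outline coincides with the paper's: decompose into strips, dominate within strip $i$ by homogeneous percolation at parameter $p_i$ via Fact \ref{fact:mono2}, run an induction across strips, and control the dominating right edge with Fact \ref{fact:edgeLD} plus a union bound. But the difficulty you flag in your last paragraph is a genuine gap, and the repair you sketch cannot close it. Your induction transmits only the constant slack $\delta=u_i-v_{i-1}$ across the level $z_i$, so inside strip $i$ you need the dominating right edge $\tilde r^{(i)}_j$ (half-line initial condition, parameter $p_i$) to satisfy $\tilde r^{(i)}_j\le \alpha(p_i)\,j+\delta$ for every $j$ up to the strip height $z_{i+1}-z_i\asymp t^{1/\beta}$. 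That event has probability tending to $0$, not $1$: for $p_c<p_i<1$ the right edge fluctuates around $\alpha(p_i)j$ on the scale $\sqrt{j}$ (Galves--Presutti \cite{GalPre}, Kuczek \cite{CLT}, the very results quoted before Theorem \ref{edgef}), so over a strip of diverging height it exceeds the mean line by any fixed constant $\delta$ with probability tending to one. Tuning $\beta_i$ does not help: keeping the overshoot $(\beta_i-\alpha(p_i))(z_{i+1}-z_i)$ within the available slack $\delta$ forces $\beta_i-\alpha(p_i)=O(t^{-1/\beta})$, far inside the $\sqrt{j}$ fluctuation window, where no bound of the shape of Fact \ref{fact:edgeLD} can hold (and in any case the constants $C,\gamma$ there are not quantified as $\beta\downarrow\alpha(p)$). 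So the step ``yielding $r_t(z_i+j)\le u_i+(1-i/m)j$ throughout the strip'' does not follow from what you have; it is the heart of the lemma, not a calibration detail to be balanced away.

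What the argument needs is a speed margin bounded away from $0$ as $t\to\infty$. At the corresponding step the paper asserts that to reach $h_t$ inside strip $i$ from a starting position at most $v_{i-1}$ the edge would have to travel at average speed exceeding $1-(i-1)/m$, i.e.\ a margin of a full $1/m$ above the dominating speed $\alpha(p_i)=1-i/m$, and it applies Fact \ref{fact:edgeLD} with that $t$-independent gap before summing over the heights in the strip; the accounting there is terse, but the essential point is that the large-deviation estimate is used with a gap of order $1/m$, independent of $t$, and any macroscopic discrepancy between $h_t$ and $\Gamma_t$ (of order $(z_{i+1}-z_i)/m$) is then absorbed by the factor $(1+\eta)$ in Theorem \ref{thm:ooutside}(ii) through Lemma \ref{hvsgamma}. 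In your version the comparison line has slope exactly equal to the dominating speed and only constant offsets to spare, so no such margin exists anywhere in the argument. A correct completion must create room inside each strip that grows with $t$ --- for instance, compare against a line whose slope exceeds $\alpha(p_i)$ by an amount of order $1/m$, or carry forward slack proportional to the strip height rather than the constant $\delta$, and then let Lemma \ref{hvsgamma} absorb the resulting excess --- rather than trying to shrink the large-deviation gap toward $\alpha(p_i)$.
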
   

\begin{proof} 

Let $1 \le i < m$. Suppose that $r_{z_i}(t) \le v_{i-1}$. To prove the result it is enough to show that as $t\to\infty$
\begin{align}
P( r_k(t)  \le h_t(k) \hbox{ for $z_i \le k < z_{i+1}$ }) \to 1.  \label{eq:hind}
\end{align}
When $i=0$, the dominating process has $p_0=1$ so 
$$
P( r^0_k \le h_t(k) \hbox{ for $z_0 \le k < z_{1}$ }) = 1.
$$

Now suppose $i>0$. When $k \in [z_i,z_i + u_i-v_{i-1})$, it is impossible for the process to reach $h_t(k)$ since the $x$-coordinate of the right-most particle can increase by at most 1 on each step. 
In order to get from $v_{i-1}$ to $v_i$ in time $z_{i+1}-z_i$, the right edge would have to travel at an average speed of more than $ 1 - (i-1)/m$. Using
Fact \ref{fact:edgeLD}, and summing over $k \in [z_i + u_i-v_{i-1},z_{i+1}]$ proves \eqref{eq:hind}.
\end{proof}

\begin{lemma} \label{hvsgamma}
Let $\eta>0$. If we take $m$ large enough and $\delta$ small then $h_t(y) \leq (1+ \eta) \Gamma_t(y)$ for all 
$y \in [0,(1+\eta)N]$.
\end{lemma}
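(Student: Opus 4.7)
The proof is a deterministic comparison between the piecewise-linear envelope $h_t$ and the smooth target $\Gamma_t$, carried out slope by slope on each sub-interval $[z_i,z_{i+1})$. On such a piece $h_t$ has constant slope $\alpha(p_i) = 1-i/m$, while $\Gamma_t'(y) = \alpha(\rho(y,t))$ is monotonically decreasing in $y$ (since $\rho(\cdot,t)$ is decreasing and $\alpha(\cdot)$ is increasing on $[p_c,1]$), running from $\alpha(p_i)=1-i/m$ at $y=z_i$ down to $\alpha(p_{i+1})=1-(i+1)/m$ at $y=z_{i+1}$. Hence $0\le h_t'(y)-\Gamma_t'(y)\le 1/m$ on each piece. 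Integrating from $0$ to $y$ and including the initial value $h_t(0)=\delta$ together with the $m-1$ jumps of size $\delta$ at $z_1,\ldots,z_{m-1}$, one obtains
$$h_t(y)-\Gamma_t(y)\ \le\ m\delta + \frac{y}{m}, \qquad y\in[0,z_m].$$
To extend the bound to $[0,(1+\eta)N]$, note that \eqref{npt} gives $z_m/N\to c_{p_c-2\delta,\beta}/c_{p_c,\beta}$, which tends to $1$ as $\delta\to 0$, so choosing $\delta$ small we have $z_m\le (1+\eta)N$ and we may take $h_t$ constant beyond $z_m$ without affecting the displayed bound.

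For the lower bound on $\Gamma_t$, I would exploit that $g'(\tilde y)=\alpha(f(\tilde y))$ is decreasing (same monotonicity), so $g$ is concave on $[0,y_c]$ with $g(0)=0$; this forces $g(\tilde y)/\tilde y$ to be non-increasing on $(0,\infty)$, and combined with $g\equiv g(y_c)$ beyond $y_c$, for $\tilde y\in(0,(1+\eta)y_c]$,
$$g(\tilde y)\ \ge\ \frac{g(y_c)}{(1+\eta)y_c}\,\tilde y\ =:\ c_\eta\,\tilde y, \qquad c_\eta>0.$$
Unscaling, $\Gamma_t(y)\ge c_\eta y$ on $(0,(1+\eta)N]$. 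The desired inequality $h_t(y)\le (1+\eta)\Gamma_t(y)$ then reduces to $m\delta + y/m\le \eta c_\eta y$. Taking $m\ge 2/(\eta c_\eta)$ dispatches the $y/m$ term, and for $y$ bounded below by any fixed positive constant, $\delta$ small makes the $m\delta$ piece negligible against $\eta c_\eta y$.

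The main subtlety is the small-$y$ regime, where $\Gamma_t$ is itself small and the additive $m\delta$ is not immediately absorbed by the multiplicative slack $\eta\Gamma_t(y)$. Here I would use the finer estimate $\alpha(\rho(y,t))\to 1$ uniformly for bounded $y$ as $t\to\infty$ (since $\rho(y,t)\to 1$), so $\Gamma_t(y)\ge (1-o(1))\,y$ on such $y$; combined with $h_t(y)\le y+m\delta$ from the slope-$1$ bound on the first piece, the inequality reduces to $y+m\delta\le (1+\eta)(1-o(1))y$, which holds once $\delta$ is taken small compared to $\eta$. This balancing of the three parameters $(m,\delta,\eta)$ against the two regimes of $\Gamma_t$ (linear near the origin, flat near $y_c$) is the only delicate point in an otherwise routine calculation.
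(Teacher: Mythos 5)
Your argument is, in substance, the paper's own: decompose $[0,z_m]$ into the strips $[z_i,z_{i+1})$, use that $\alpha(\rho(\cdot,t))$ is decreasing so that on each strip the slope of $h_t$ exceeds that of $\Gamma_t$ by at most $1/m$, account for the $m$ additive $\delta$'s, and then choose $m$ large and $\delta$ small. The paper phrases the strip comparison multiplicatively ($h(z_i-)-h(z_{i-1})<(1+\eta/2)(\Gamma_t(z_i)-\Gamma_t(z_{i-1}))$, with the last strip handled by the slope bound $\alpha(p_{m-1})=1/m<\eta/2$), whereas you keep an additive bound $h_t(y)-\Gamma_t(y)\le m\delta+y/m$ and convert it with the linear lower bound $\Gamma_t(y)\ge c_\eta y$ coming from concavity of $g$; this is cleaner bookkeeping but not a different proof. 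Your constant extension of $h_t$ beyond $z_m$ is a reasonable reading of a point the paper leaves implicit.

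The one place you go beyond the paper, the small-$y$ regime, is also the one place your argument does not close. The reduction ``$y+m\delta\le(1+\eta)(1-o(1))y$, which holds once $\delta$ is taken small compared to $\eta$'' is false for $y$ of order $m\delta/\eta$ or smaller, no matter how small the fixed $\delta>0$ is; indeed at $y=0$ one has $h_t(0)=u_0=\delta>0=(1+\eta)\Gamma_t(0)$, so the inequality cannot hold on all of $[0,(1+\eta)N]$ for $h_t$ as defined. You correctly identified the delicate point, but the honest resolution is not a smaller $\delta$: either the lemma must be read for $y$ bounded away from $0$ (say $y\ge y_0$ for an arbitrary fixed $y_0>0$, or $y\ge\epsilon N$), or one observes that in the application the small heights are trivial, since $r_t(k)\le k$ deterministically while $\Gamma_t(k)\ge\alpha(\rho(k,t))\,k=(1-o(1))k$ uniformly for $k\le\epsilon t^{1/\beta}$, so $r_t(k)\le(1+\eta)\Gamma_t(k)$ there for large $t$ without any reference to $h_t$. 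Note the paper's own proof glosses over exactly the same issue (``if $\delta$ is small enough $h(y)<(1+\eta/2)\Gamma_t(y)$ for $y<z_{m-1}$''), so apart from this overclaim your write-up matches the published argument.
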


\begin{proof} 
We begin by noting that Fact \ref{fact:mono2} implies that  $\alpha(f(z))$ is decreasing while $f(z)>p_c$.
If $m$ is large enough then $\alpha(p_i)-\alpha(p_{i-1})< \eta/2$ for $1\le i < m$
and $\alpha(p_{m-1}) = 1/m < \eta/2$. To prove the result now note that
if $i<m$ then
\begin{align*}
\Gamma_t(z_i) - \Gamma_t(z_{i-1}) &= \int_{z_{i-1}}^{z_i} \alpha(f(y)) \, dy \\
h(z_i -) - h(z_{i-1}) &= \alpha(f(z_{i-1}))(z_i - z_{i-1})  .
\end{align*}
So, by the choices we have made above,
$$
h(z_i-) - h(z_{i-1}) < (1+\eta/2)(\Gamma_t(z_i) - \Gamma_t(z_{i-1})).
$$
Now, if $\delta$ is small enough $h(y) < (1+\eta/2)\Gamma_t(y)$ for $y<z_{m-1}$.
On $[z_{m-1},z_m]$, 
$$
h(y)-h(z_{m-1}) < (\eta/2)(y-z_{m-1}).
$$ 
If $\delta$ is small enough we have $h(y) < (1+\eta)\Gamma_t(y)$ for $y<(1+\eta)N$. 
\end{proof}

\section{Proof of Theorem \ref{thm:RE}}

To get the cluster at $0$ started, we observe that it with high probability contains all possible sites within distance $t^{b/\beta}$ with $0<b <1.$ 

\begin{lemma} \label{solid}
Let ${\cal K}(n) = \{ (x,y) \colon 0 \le y \le n, \hbox{and }|x| \le y \}$. 
For any $0<b <1$, as $t \to \infty$ 
$$P(\mathcal K (t^{b/\beta}) \subseteq \mathbb C_0) \to 1.$$

\end{lemma}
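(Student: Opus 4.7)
The plan is to prove this by a direct union bound: inside the ``light cone'' $\mathcal K(t^{b/\beta})$ every edge is open with overwhelming probability, and if \emph{all} the edges in the cone are open then every site of $\mathcal K(t^{b/\beta})$ is automatically connected to $(0,0)$ by an admissible oriented path (moving up and left or up and right).

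First I would observe that any edge whose midpoint lies at height $y\le t^{b/\beta}$ satisfies $y^{-\beta}\ge t^{-b}$, so by \eqref{rhoyt}
$$
\rho(y,t) \;=\; 1 - \exp(-t y^{-\beta}) \;\ge\; 1 - \exp(-t^{1-b}).
$$
Every edge needed to witness $\mathcal K(t^{b/\beta})\subseteq \mathbb C_0(t)$ has its midpoint at height at most $t^{b/\beta}$, so each such edge is open with probability at least $1-\exp(-t^{1-b})$.

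Next I would count: the number of lattice sites in $\mathcal K(t^{b/\beta})$ is at most $C t^{2b/\beta}$ for some absolute constant $C$, and each site is incident to at most four oriented edges, so the total number of edges whose openness is relevant is still $O(t^{2b/\beta})$. A union bound then gives
$$
P\bigl(\text{some relevant edge is closed at time } t\bigr)
\;\le\; C\, t^{2b/\beta}\,\exp(-t^{1-b}).
$$
Since $b<1$, the exponent $t^{1-b}$ dominates any polynomial in $t$, so this probability tends to $0$.

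On the complementary event every edge inside $\mathcal K(t^{b/\beta})$ is open, and it is then trivial that every $(x,y)\in\mathcal K(t^{b/\beta})\cap\mathcal L$ is reached from $(0,0)$ by, e.g., making $(y+x)/2$ up-right steps and $(y-x)/2$ up-left steps. Hence $\mathcal K(t^{b/\beta})\subseteq \mathbb C_0(t)$ on this event. There is essentially no genuine obstacle here; the only point worth verifying is that $t^{1-b}$ beats the polynomial prefactor $t^{2b/\beta}$ in the union bound, which is immediate because $1-b>0$ while the prefactor grows polynomially.
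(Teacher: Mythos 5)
Your proposal is correct and is essentially the paper's own proof: both bound the closing probability of each edge in the cone by $\exp(-t^{1-b})$, count $O(t^{2b/\beta})$ relevant edges, and conclude by a union bound, with the observation that all edges open forces the whole cone into $\mathbb C_0(t)$.
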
 

\begin{proof} By \eqref{rhoyt}, each edge in  ${\cal K}(t^{b/\beta})$ is closed with probability $\le \exp(-t^{1-b})$. Since there are fewer than
$t^{2b/\beta}$ edges, the result follows from a union bound.
\end{proof}

\subsection{Constructing the renormalized lattice} \label{sec:renorm}

The next step is renormalizing the lattice to compare Poisson percolation with 1-dependent oriented percolation with parameter $1-\epsilon$. 
As in the previous section, we introduce probabilities $p_i$, $1 \le i \le m-1$ so that $\alpha(p_i) = 1 - i/m$. We let $z_0 = t^{b/\beta}$
and for $1 \le i \le m-1$ let $z_i = n(p_i,t)$. The key ingredient for describing the density is to show that the rightmost edge of $\mathbb C_0$ stays to the right of $(1- \eta) \Gamma$. When $1 \le i \le m-1$ and $z_{i-1} < y \le z_{i}$, we bound our system from below by oriented percolation in which edges are open with 
probability $p_i$, and the edge speed is $\alpha_{i} = 1 -i/m$. 

To lower bound the process in which each edge is open with probability $p_i$ we will use a block construction. 
So that the lattices associated with different strips will fit together nicely, the $x$ coordinates of the sites in
the renormalized lattice will always be at integer multiples of some fixed constant $L$, and we will vary the heights of the blocks. In the $i$th strip $z_{i-1} < y \le z_{i}$, we
let $A^i_{0,0}$ be the parallelogram with vertices
\begin{align*}
u_0 = (-1.5\delta L,0) & \qquad u_1 = ((1+1.5\delta)L, (1+3\delta)L/\alpha_i) \\
v_0 = (-0.5\delta L,0) & \qquad v_1 = ((1+2.5\delta)L, (1+3\delta L)/\alpha_i)
\end{align*}
and let $B^i_{0,0} = -A^i_{0,0}$,

To begin to define the renormalized lattice, we let $T_1=z_0$. In the $i$th strip, the points in the renormalized lattice are
$$
(c^i_m,d^i_n) = (mL,T_i + n(1+\delta)L/\alpha_i)
$$
where $m$ and $n$ are integers so that $m+n$ is even, $n \ge 0$ and $T_i+n(1+3\delta)L/\alpha_i \le z_i$. 
The last condition is to
guarantee that all the edges we consider in the $i$th part of the construction are open with probability at least $p_i$.
Note that in each strip the vertical index $n$ begins at 0. 
 
To continue the construction when $i<m-1$ we let
$$
T_{i+1} = \max\{ T_i + n(1+\delta)L/\alpha_i :  T_i+n(1+3\delta)L/\alpha_i \le z_i \}.
$$
Let $A^i_{m,n} = (c^i_m,d^i_n) + A^i_{0,0}$, let $B^i_{m,n} = (c^i_m,d^i_n) + B^i_{0,0}$ and let $I^i_{m} = c^i_m +(-0.5\delta L,0.5\delta L)$.
The parallelograms are designed so that (see Figure \ref{fig:block})

\begin{enumerate}[(i)]
	\item at height $d^i_{n+1} = d^i_n + (1+\delta L)/\alpha$, $A^i_{m,n}$ fits in $I^i_{m+1}$.

		\item at height $d^i_n + (1+3\delta L)/\alpha$ the $x$ component of the left edge of $A^i_{m,n}$ is 
the same as that of the right edge of $B^i_{m+1,n+1}$.

\end{enumerate}

\begin{figure}[h]
\begin{center}
\begin{picture}(160,250)
\put(30,30){\line(1,2){80}}
\put(40,30){\line(1,2){80}}
\put(30,30){\line(1,0){10}}
\put(70,77){$A^i_{m,n}$}
\put(80,27){$c^i_n$}
\put(110,190){\line(1,0){10}}
\put(135,187){$c^i_n + (1+3\delta L)/\alpha_i$}
\put(50,30){\line(-1,2){40}}
\put(60,30){\line(-1,2){40}}
\put(50,30){\line(1,0){10}}
\put(42,28){$\star$}
\put(92,148){$\star$}
\put(100,150){\line(1,0){10}}
\put(120,147){$c^i_n + (1+\delta)L/\alpha_i$}
\put(100,150){\line(-1,2){40}}
\put(110,150){\line(-1,2){40}}
\put(80,150){\line(1,0){10}}
\put(80,150){\line(1,2){40}}
\put(90,150){\line(1,2){40}}
\put(5,225){$B^i_{m+1,n+1}$}
\put(135,225){$A^i_{m+1,n+1}$}
\end{picture}
\caption{Picture of the block construction. Stars mark points of the renormalized lattice. }
\label{fig:block}
\end{center}
\end{figure}
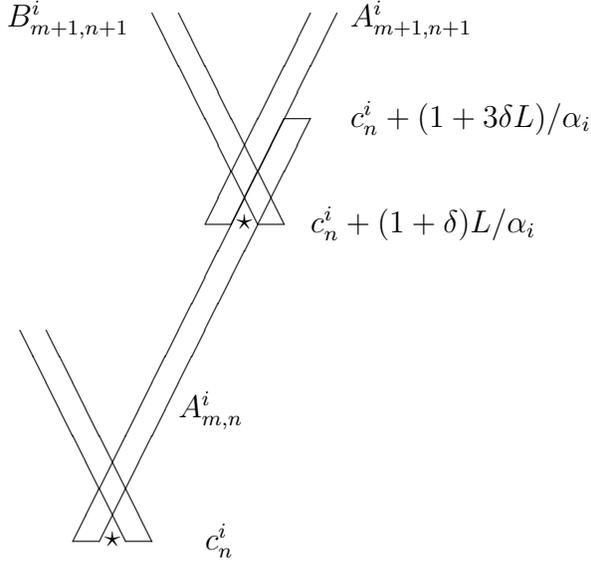

\medskip
We say that the good event $G^i_{0,0}$ occurs if 
\begin{enumerate}[(I)]
\item in $A^i_{0,0}$ there is a path from the bottom edge to the top edge.

\item in $B^i_{0,0}$ there is a path from the bottom edge to the top edge.
\end{enumerate}

Note that the existence of the paths in (I) and (II) are determined by the edges in $A^i_{0,0}$ and $B^i_{0,0}$ respectively.
The parallelograms are constructed to overlap in such a way (see Figure \ref{fig:block}) that, if there is a path in $A^i_{m,n}$, and there are paths in $B^i_{m+1,n+1}$ and $A^i_{m+1,n+1}$, then there is a path from the bottom edge of $A^i_{m,n}$ to the top edges of $A^i_{m+1,n+1}$ and $B_{m+1,n+1}$.

We define $G^i_{m,n}$ by translation. In  \cite[Section 9]{RD84} it was shown that, given $\ep>0$ ,
for $L \ge L_i$ it holds that $P(G^i_{0,0}) \ge 1-\ep$. Let $\bar L = \max_{1\le i \le m-1} L_i$.
Suppose $\delta< 0.01$, let $R^i_{0,0} = [-1.5L,1.5L] \times [0,(1+3\delta)L/\alpha_i]$, and let
$$
R^i_{m,n} = (c^i_m,c^i_n) + R^i_{0,0}.
$$ 
The existence of paths in parallelograms that do not overlap is independent.
The box $R^i_{0,0}$ intersects $R^i_{2,1}$, $R^i_{-2,1}$, $R^i_{-1,0}$,
$R^i_{1,0}$, $R^i_{2,-1}$, and $R^i_{-2,-1}$, so the construction is one dependent (as described above Fact \ref{fact:subsets}). 
\subsection{Lower bound for the right-most particle} \label{sec:RE}

To facilitate comparison with oriented percolation, we will renumber the rows of renormalized sites with $z_0 \le y \le z_{m-1}$ by the nonnegative integers $0, 1, 2, \ldots M   $
and let $\tau_0, \tau_1, \ldots ,\tau_M=\inf \{ k \colon z_k \geq (1-\eta)N\}$ be the corresponding heights in Poisson percolation on the usual lattice. In our construction, we will pick 
$L$ large and then let $t\to\infty$, so there are constants $C_1$ and $C_2$ so that
$C_1 t^{1/\beta} \le M \le C_2t^{1/\beta}$. Also, fix $0<b<1$ and let $K=K(t,b) = \max \{ j \colon \tau_j \leq t^{b/\beta}\}$ be the last parallelogram below height $t^{b/\beta}.$ Note also that $K \to \infty$ since $L$ is fixed.

Consider 1-dependent oriented percolation in which edges are open with probability $1-\ep$. Fix two numbers $0<q<q'<1$. Define the set of edges $\mathcal E_K = [q'K,K] \times \{0\}$, and 
$$s'_k = \max \{ x \colon \text{ there exists $w \in \mathcal E_ K$ with } w \to (x,k)\}$$
to be the rightmost edge at height $k$ accessible from $\mathcal E_K$. By Lemma \ref{solid} we know that $\mathcal E_K$ will have all edges open with probability going to 1. Moreover,  we claim that as $t \to \infty$,
\begin{align}P(s_k' \geq qk \text{ for all $k \geq 0$}) \to 1.\label{eq:right}
\end{align}
Fact \ref{fact:subsets} guarantees that the probability $\mathcal E_K$ contains a path to infinity goes to 1 as $t \to \infty$. Since a path can displace at most one unit to the left at each height, the first time we could have $s_k' < qk$ is at height $(q'-q)K/2 $.  Applying the bound from Fact \ref{fact:reld} to the rightmost edge started from $\mathcal E_K$, we then have
 $$P(s_k' \leq qk \text{ for some $k\geq 0$} ) \to \sum_{k= (q'-q)K/2}^M C e^{-\gamma k} \to 0,$$
 since $K \to \infty$.

To get a lower bound on the right-edge in the Poisson percolation process, we consider the mapping
$
(s_k',k) \to (Ls_k',\tau_k)
$
 from the renormalized lattice back to the original lattice. Because of \eqref{eq:right}, we consider the image of the line $y=qk$ under this map. It is given by a piecewise linear function with
\begin{itemize}

  \item 
$h(0) = 0$, and $h(t) = qk$ for $k \in [0,z_0]$, and
\item
 $h(k) = h(z_{i-1}) + q\alpha_i(k-z_{i-1})$  for  $k \in [z_{i-1}, z_i]$ with $1\leq i \le m-1$.
\end{itemize}

The renormalized sites that make up the right edge will map to the right of this curve. The paths that connect them will
lie in the associated parallelogram from Section \ref{sec:renorm}, so they cannot go further than $(1+3\delta)L/\alpha_i$ to the left of $h$. It follows that 
$$
P( r_t(k) \ge h(k) - (1+3\delta)L/\alpha_i \hbox{ for all } z_{i-1} \le t \le z_{i} ) \to 1.
$$
On $[z_{i-1},z_i]$, $h$ has slope $q\alpha_i$ while $\Gamma_t$ increases at rate $\le \alpha_{i-1} = \alpha_i + 1/m$.
If $m$ is large enough then $\alpha_i \ge (1-\eta/2)\alpha_{i-1}$ for $1 \le i \le M$. 
It follows that if $q$ is chosen close enough to 1
then $h(k) - (1+3\delta)L/\alpha_i \ge (1-\eta)\Gamma_t(k)$ for all $z_{i-1} \le k \le z_i$ and $1\le i \le m-1$. The proof for the left edge is similar. 
%

\section{Proof of Theorem \ref{thm:oinside}}

Consider the site $w = (x, y)$ with $z_i \leq y < z_{i+1}$, so that it is in the $i${th} strip of the unscaled lattice.
Fact \ref{fact:survival} implies 

\mn
($\star$) if $n_i = (1/ \gamma_i) \log(C_i N^4)$ and the dual process started from $w$ survives for $n_i$ units of time
then the probability $w \not\in \mathbb C_0$ is $ \le 1/N^4$. 

\mn
This says that $\mathbb C_0$ is closely approximated by the points whose dual survives for time $n_i$. Let
$$
R_{j,k} = [jN^a,(j+1)N^a] \times [kN^a,(k+1)N^a]
$$ 
and suppose that all the points in $R_{j,k}$ are in the $i$th strip.

Let $A_{w} = \{\tau^w \geq  k_i\},$
and count the number of points in $R_{j,k}$ with a long-surviving dual with
\begin{align*}
S_{j,k} = \sum_{w \in R_{j, k}} \ind{A_w}.
\end{align*}
Since $|R_{j,k}| = N^{2s}$, ($\star$) ensures that
\begin{align}
P(S_{j,k} \neq |\mathbb C_0 \cap R_{j,k} |) \leq N^{2a-4}.\nonumber 
\end{align}
Since there are no more than $N^{2-2a}$ boxes with high probability this holds for all of them.

So, it suffices to study $S_{j,k}$. We start by centering it. Let $\theta_w = P( A_w)$, and define 
\beq
\bar{S}_{j,k} = S_{j,k} - ES_{j,k} = \sum_{w \in R_{j,k}} \ind {A_w} - \theta_w. \nonumber 
\label{center}
\eeq
The advantage of considering $A_w$ is that if $w=(x,y)$ the event $A_w$ is determined by edges in $[x-n_i,x+n_i]\times[y-n_i,y]$ so if $\|w-w'\| > 2n_i$
the indicator random variables are independent. Using the bound 
$$
|\ind{A_w\cap A_{w'}} - P(A_w)P(A_{w'})| \leq 1
$$  
when  $\|w - w'| \leq 2n_i$, we obtain
\beq
{E} \bar S_{j,k}^2  \leq N^{2a}\cdot 4n_i^2 \leq \frac{4}{\gamma_i} N^{2a} \log^2{C_i N^4}.
\label{secmoment}
\eeq

Using \eqref{secmoment} with Chebyshev's inequality gives for $\delta >0$ and some $C_i'>0$
\beq
P(|\bar{S}_{j,k}| >  \delta N^{2a}) \leq \frac{C_i' N^{2a} \log N}{\delta^2 N^{4a}} = O( N^{-2a} \log N ).
\label{eq:cheby}
\eeq
Since there are $O(N^{2 - 2a})$ many different boxes $R_{j,k}$, it follows from \eqref{eq:cheby} that 
\begin{align}
P\left( \sup_{(j,k) \in \Lambda(\eta,\eta)} |\bar S_{j,k}| > \delta N^{2a} \right) = O(N^{2 - 4a} \log N) . \nonumber 
\end{align}
The right term is $o(1)$ since $a > 1/2$.
To relate this back to $\mathbb C_0$ we note that $f(y)$ defined in \eqref{foy} is Lipschitz continuous
and $\theta(p)$ is on $[p_c+\delta,1]$ so  
$$
\sup \{ |\theta(\rho(w, t)) - \theta(\rho(w', t))|\colon w,w' \in R_{j,k} \} \le C N^{a-1} \to 0.
$$
Using this and Fact \ref{fact:survival}, we can replace the $P(A_w)$ terms in $S_{j,k}$ with a representative 
$\theta_{j,k} = \theta(\rho((x_j,y_k),t))$, and Theorem \ref{thm:oinside} follows.

\section{Proof of Theorem \ref{edgef}}

Recall $N = n(p_c,t)$. 
In our process, the right edge particle cannot be part of an infinite cluster, so we define renewals to be times at which the rightmost particle lives for time at least $\log^2 N$. This is motivated by the bound from Fact \ref{fact:survival}. To get started, if $b<1$ then the state at time $t^{b/\alpha}$ is an interval and the rightmost particle survives for $\log^2N$ with probability $\to 1$ by Lemma \ref{solid}. Suppose $t_i$ is the time of the $i$th renewal and let $p_i$ be the probability bonds are open at that time. On $[t_i,t_i+2\log^2N]$ 
bonds are open with probability $\ge p_i - c (\log^2 N)/N$. The 2 is to allow us to find the renewal point and then verify it works.
The bonds of interest are in a triangle with point at $(r_i,T_i)$, sides with slope 1, and height $2 \log^2 N$ 
so we can by Fact \ref{fact:mono2} couple the inhomogenenous system with a system with probabilities $p_i$ so that with high probability there are no errors. 

Unfortunately the increments in the right-edge defined in this way are not independent. If $r_i-r_{i-1}$ is large then the $p$ for the next increment
will be smaller. To fix this we will again divide $[0,N]$ into strips by choosing $\alpha(p_i) = 1 -i/m$ and $z_i = n(p_i,t)$ but now we will
use $m = N^{0.6}$ strips. For renewals that begin in the strip $z_i \le y < z_{i+1}$ we will upper bound the movement of the right edge by using $p=p_i$
and lower bound by using $p=p_{i+1}$. The large number of strips guarantees that the difference between the upper and lower bounds on 
$E(r_k-r_{k-1})$ will be $N^{-0.6}$ so when we sum $N$ of these terms the result is $O(N^{0.4}) = o(N^{0.5})$

Kuczek \cite{CLT} has shown that when $p$ is fixed $r_i-r_{i-1}$ has an exponential tail,
so using the Lindberg-Feller theorem, see e.g., Theorem 3.4.5 in \cite{PTE4}, on the upper bound and on the lower bound 
\beq
\frac{\sum_{k=1}^n (r_k-r_{k-1}) - E(r_k-r_{k-1})}{\sqrt{ \sum_{k=1}^n \var(r_k - r_{k-1}) }} \Rightarrow \chi
\label{discCLT}
\eeq
where $\chi$ is standard normal.
To convert this to continuous time note that for homogeneous percolation
\begin{align*}
E(r_i-r_{i-1})  = \alpha(p) E_{p}(t_i-t_{i-1}) & \quad\hbox{because $Er(t)/t \to \alpha(p)$}, \\
\var(r_i-r_{i-1})  = \sigma^2(p)E_{p}(t_i-t_{i-1}) & \quad\hbox{because $\var r(t)/t \to \sigma^2(p)$}.
\end{align*}
Let $M(s)$ be the number of renewals needed to get to height $s$. Replacing $n$ by $M(s)$ in \eqref{discCLT} the result is
$$
\frac{r(s) - \int_0^s \alpha(p(y,t)) \, dy }{\int_0^s \sigma^2(p(y,t)) \, dy } \Rightarrow \chi.
$$

Taking $s=Nu$ and replacing the denominator by $\sqrt{N}$ we have convergence of the one dimensional distributions to the desired limit.
Since the increments of the limit process are independent, convergence of finite dimensional distributions follows easily. Since
$$ 
\sum_{k=1}^n (r_k-r_{k-1}) - E(r_k-r_{k-1})
$$
is a square integrable martingale it is not hard to use the $L^2$ maximal inequality to check that the tightness criteria that can be found for 
example in Section 8 of Billingsley \cite{Bill}. Alternatively one can invoke Theorem 4.13 on page 322 of Jacod and Shiryaev. \cite{JS}.


\end{document}